\newcommand{\N}{\mathbb{N}}
\newcommand{\R}{\mathbb{R}}
\newcommand{\C}{\mathbb{C}}
\newcommand{\Z}{\mathbb{Z}}
\newcommand{\Gc}{\mathcal{G}}
\newcommand{\Hc}{\mathcal{H}}
\newcommand{\cH}{\mathcal{H}}
\newcommand{\Lc}{\mathcal{L}}
\newcommand{\cL}{\mathcal{L}}
\newcommand{\Kc}{\mathcal{K}}
\newcommand{\Fc}{\mathcal{F}}
\newcommand{\Vc}{\mathcal{V}}
\newcommand{\Wc}{\mathcal{W}}
\newcommand{\Sc}{\mathcal{S}}
\newcommand{\Spec}{\sigma}
\newcommand{\dom}{\operatorname{D}}
\newcommand{\disc}{\mathrm{disc}}
\newcommand{\ess}{\mathrm{ess}}
\newcommand{\diag}{\operatorname{diag}}
\newcommand{\Specess}{\sigma_{\mathrm{ess}}}
\renewcommand{\ker}{\Ker}
\newcommand{\Ker}{\operatorname{Ker}}
\newcommand{\Span}{\operatorname{Span}}
\newcommand{\ud}{\mathrm{d}}
\newcommand{\wto}{\rightharpoonup}
\newcommand\1{{\ensuremath {\mathds 1} }}
\newcommand{\dqed}{\hfill$\diamond$}
\newcommand{\rest}{\!\upharpoonright\!}
\newcommand\ii{{\ensuremath {\infty}}}
\newcommand\pscal[1]{{\ensuremath{\left\langle #1 \right\rangle}}}
\newcommand{\norm}[1]{ \left| \! \left| #1 \right| \! \right| }
\newcommand{\tr}{\mathrm{tr}}
\theoremstyle{plain}
\newtheorem{lemma}{Lemma}
\newtheorem{proposition}[lemma]{Proposition}
\newtheorem{theorem}[lemma]{Theorem}
\newtheorem{corollary}[lemma]{Corollary}
\theoremstyle{definition}
\newtheorem{example}{Example}
\newtheorem{definition}{Definition}
\newtheorem{remark}{Remark}
\title[Weyl theorems and spectral pollution]{Generalised Weyl theorems and spectral pollution in the Galerkin method}
\author[L.~Boulton]{Lyonell Boulton$^1$}
\address{$^1$Department of Mathematics and Maxwell Institute for Mathematical 
Sciences, Heriot-Watt University, Edinburgh
EH14 4AS, United Kingdom}
\email{L.Boulton@hw.ac.uk}
\author[N.~Boussaid]{Nabile Boussa{\"\i}d$^2$}
\address{$^2$D\'epartement de Math\'ematiques (CNRS-UMR 6623), UFR Sciences et techniques, 
16 route de Gray, 25 030 Besan\c{c}on 
cedex, France}
\email{nabile.boussaid@univ-fcomte.fr}
\author[M.~Lewin]{Mathieu Lewin$^3$}
\address{$^3$CNRS \& D\'epartement de Math\'ematiques (CNRS-UMR 8088),
Universit\'e de Cergy-Pontoise,
95 000 Cergy-Pontoise,
France}
\email{Mathieu.Lewin@math.cnrs.fr}
\date{November 21, 2011}
\subjclass[2000]{}
\keywords{Weyl's Theorem, generalised essential spectrum, spectral pollution, Galerkin method}
\begin{document}

\begin{abstract}
We consider a general framework for investigating spectral pollution in the Galerkin method. We show how this phenomenon is characterised via the existence of particular Weyl sequences which are singular in a suitable sense. For a semi-bounded selfadjoint operator $A$ we identify relative compactness conditions on a selfadjoint perturbation $B$ ensuring that the limiting set of spectral pollution of $A$ and $B$ coincide. Our results show that, under perturbation, this limiting set behaves in a similar fashion as the essential spectrum.
\end{abstract}
\maketitle

\bigskip

\tableofcontents

%%%%%%%%%%%%%%%%%%%%%%%%%%%%%%%%%%%%%%%%%%%%%%%%%%%%%%
%%%%%INTRO
%%%%%%%%%%%%%%%%%%%%%%%%%%%%%%%%%%%%%%%%%%%%%%%%%%%%%%

\section{Introduction}

Let $A$ be a self-adjoint operator acting on a separable infinite
dimensional Hilbert space $\mathcal{H}$ and let $\lambda$ be an isolated
eigenvalue of $A$. For $\mathcal{I}\subset \mathbb{R}$ an interval let $\1_{\mathcal{I}}(A)$ be the spectral projector of $A$ associated to $\mathcal{I}$.
The numerical estimation of $\lambda$ whenever $\inf\sigma_{\rm ess}(A)<\lambda<\sup\sigma_{\rm ess}(A)$ and, more generally, when 
\[
   \operatorname{Tr} \1_{(-\infty,\lambda)}(A)
   =\operatorname{Tr} \1_{(\lambda,\infty)}(A)=\infty,
\]
constitutes a serious challenge in applied spectral theory.
Indeed, it is well established that classical approaches, such as the Galerkin method, suffer from variational collapse under no further restrictions on the approximating space. This often leads to numerical artefacts which do not belong to the spectrum of $A$, giving rise to what is generically called \emph{spectral pollution}. 

The spectral pollution phenomenon occurs in different practical contexts such as Sturm-Liouville operators \cite{AceGheMar-06,StoWei-95,StoWei-93}, perturbations of periodic Schr{\"o}dinger operators \cite{BouLev-07,Marletta10} and systems underlying elliptic partial differential equations \cite{Arnolds, MR1705031,MR1642801}. It is a well-documented difficulty in quantum chemistry and physics, in particular regarding relativistic computations \cite{Kutzelnigg-84,StaHav-84,Grant-82,DraGol-81}. It also plays a fundamental role in elasticity and magnetohydrodynamics \cite{MR2077211,MR1935966,MR1431212,MR1285306}. 
In recent years this phenomenon has raised  a large interest in the mathematical 
community \cite{Marletta10,Hansen-08,LevSha-04,DavPlu-04,Descloux-81,Pokrzywa-81,Pokrzywa-79}. There are known pollution-free computational procedures alternative to the basic
Galerkin method. These include specialised variational formulations such as those studied at length in \cite{MR2434346,MR2077211,MR1761368,MR1406082} as well as general methods such as those proposed in \cite{BouStr-10,BouBou-09, BouLev-07,LevSha-04,DavPlu-04}.  

A natural approach to deal with spectral pollution, is to derive conditions on the approximating subspaces guaranteeing a ``safe'' Galerkin method in a given interval of the real line. These conditions were found in \cite{LewSer-09} on an abstract setting for operators with particular block-type structures with respect to  decompositions of the ambient Hilbert space. They turn out to be motivated from techniques in numerical analysis \cite{Arnolds,MR1642801,MR1431212} and computational physics and chemistry (see  references in \cite{LewSer-09}). 
 
In the present paper we adopt a more general viewpoint than that of \cite{LewSer-09}.
We establish an abstract framework for spectral pollution in the Galerkin method and then examine its invariance under relatively compact perturbations. Our main concern is primarily theoretical and general in nature. Nonetheless, however, we include various simple examples which illustrate the many subtleties faced when dealing with spectral pollution on a practical setting. 

The technical context of our results can be summarised as follows. Let $\dom(A)$ be the domain of $A$.  Let $\mathcal{L}=(\mathcal{L}_n)_{n\in \N}$ be a sequence of finite dimensional subspaces of $\dom(A)$, dense in the graph norm as $n\to\ii$ (Definition~\ref{def:A-regular})\footnote{Below we will often consider a slightly more general framework which covers
important applications such as those involving the finite element method. In this framework we will only require that the subspaces $\mathcal{L}_n$ lie in the domain of the quadratic form associated to $A$ and that the sequence
$\mathcal{L}$ is dense in the form sense. However, in this more general setting we restrict our attention to
 $A$ being  semi-bounded.}. Denote by $A_n$ the compression of $A$ to $\mathcal{L}_n$.  Denote by $\sigma(A,\cL)$ the large $n$ limiting set in Hausdorff distance of the Galerkin method spectra $\sigma(A_n)$,
(Definition~\ref{rel_spec}). Then  
$\sigma(A)\subset \sigma(A,\Lc)$ (Proposition~\ref{prop_2}), however in general equality fails to occur in this identity. An abstract notion of \emph{limiting spectral pollution set} can be formulated naturally as, 
\[
\sigma_{\operatorname{poll}}(A,\cL)=\sigma(A,\cL)\setminus \sigma(A).
\]
As it turns, points in the limiting spectral pollution set behave in a similar fashion as points in the essential spectrum (Proposition~\ref{prop:properties_of_rel_spec}). Therefore a question arises: what sort of conditions on a perturbation $B$ ensure 
$\sigma_{\operatorname{poll}}(A,\cL)=\sigma_{\operatorname{poll}}(B,\cL)$?  
Below we establish a theoretical framework in order to address this question.

Section~\ref{sec1} and \ref{sec2} are devoted to a characterisation of $\sigma(A,\cL)$ in terms of special Weyl-type sequences ($\cL$-Weyl sequences) and its structural properties. In Definition~\ref{rel_ess_spec} we consider a decomposition of $\sigma(A,\cL)$ as the union of a \emph{limiting  essential spectrum} associated with $\cL$, $\sigma_\ess(A,\cL)$, and its \emph{limiting  discrete spectrum} counterpart, $\sigma_\disc(A,\cL)$. The former contains both the true essential spectrum $\sigma_\ess(A)$ and $\sigma_{\operatorname{poll}}(A,\cL)$ (Proposition \ref{prop:quasi_Weyl}).

The purpose of sections~\ref{sec3} and \ref{sec4} is to find conditions on $B$ ensuring
\begin{equation}
\sigma_\ess(B,\cL)=\sigma_\ess(A,\cL).
\label{eq:intro_perturb} 
\end{equation}
According to our main result (Theorem~\ref{thm:Weyl}), when $A$ and $B$ are bounded from below and 
\begin{equation}
(A-a)^{1/2}(B-a)^{-1/2}-1
\label{eq:intro_condition_compact} 
\end{equation}
is a compact operator for some $a$ negative enough, \eqref{eq:intro_perturb} holds true. Therefore, an approximating sequence $\cL$ will not asymptotically pollute for $A$ in a given interval if and only if it does not pollute for $B$ in the same interval. This generalises \cite[Corollary~2.5]{LewSer-09}. 

Our present approach consists in adapting to the context of limiting spectra, several classical results for the spectrum and essential spectrum. In turns, this leads to many unexpected difficulties which we will illustrate on a variety of simple examples. In particular, we establish (Theorem \ref{thm:mapping}) a  limiting spectra version of the spectral mapping theorem allowing to replace the unbounded operator $A$ by its (bounded) resolvent $(A-a)^{-1}$. Remarkably, this theorem fails in general (Remark~\ref{remark5}) for operators which are not semi-bounded.

%%%%%%%%%%%%%%%%%%%%%%%%%%%%%%%%%%%%%%%%%%%%%%%%%%%%%%
\section{Limiting spectra} \label{sec1}

We will often restrict our attention to $A$ being bounded from below, however we do not require this for the moment. 
Unless otherwise specified, we always assume that the subspaces $\cL_n$ are dense in the following precise sense.

\begin{definition}[$A$-regular Galerkin sequences]\label{def:A-regular}
We say that $\cL=(\Lc_n)$, $\cL_n\subset \dom(A)$, is an \emph{$A$-regular
Galerkin sequence}, or simply an $A$-regular sequence, if for all $f\in\dom(A)$ there exists a sequence of vectors $(f_n)$ with $f_n\in\cL_n$ such that $f_n\to f$ in the graph norm of $A$, that is:
\begin{equation} \label{e:con_seq}
\|f_n-f\|+\|Af_n-Af\|\to_{n\to\ii} 0.
\end{equation}
\end{definition}

The orthogonal projection in the scalar product of $\cH$ onto $\Lc_n$ will 
be denoted by 
$\pi_n:\Hc\longrightarrow \Lc_n$ and the compression of $A$ to 
$\Lc_n$ by $A_n=$ $\pi_{n}A\!\rest\!_{\Lc_{n}}:\Lc_{n}\longrightarrow \Lc_{n}$.
These compressions will sometimes be identified with any of their matrix
representations.
On sequences $(x_n)_{n\in \N}\subset \Hc$ of vectors and  $(\cL_n)_{n\in \N}$ of subspaces
$\cL_n\subset \dom(A)$ we will often suppress the index and write $(x_n)$ and
$(\cL_n)$ instead. 
we will denote by $x_n\wto x$ the fact that $x_n$ is weakly convergent to $x\in\cH$. When the norm is not specified, $x_n\to x$ will denote the fact that $\|x_n-x\|\to 0$. 

When $A$ is semi-bounded, we may also consider sequences $\Lc=(\Lc_n)$ only in the form domain of $A$. They may 
approximate the latter but not necessarily the operator domain. If $A\geq0$, for instance, this simply means that $\cL_n\subset \dom(A^{1/2})$ and $\cL$ is $A^{1/2}$-regular but not necessarily $A$-regular. In our notation, for $x_n\in \cL_n$, $Ax_n\in \dom(A^{1/2})^\#$, the dual of $\dom(A^{1/2})$ as subspace of $\mathcal{H}$. Since $\pi_n A^{1/2} y\in \mathcal{H}^\# =\mathcal{H}$ and $\pi_nA^{1/2}y\perp g$ for any $y\in \mathcal{H}$ and $g\in \mathcal{H}\ominus \cL_n$, the compression
$\pi_nA\!\upharpoonright\!_{\cL_n} \!:\!\cL_n\longrightarrow \cL_n$ is well defined also in this framework. 
Moreover, a matrix representation of
$A_n$ can be obtained in the usual manner, via $[\langle A^{1/2} b_j,A^{1/2}b_k \rangle ]_{jk=1}^{\dim \cL_n}$ for a given orthonormal basis $\{b_j\}$ of $\cL_n$. We will denote the duality product associated to $w\in \dom(A^{1/2})^\#$ by
$z\longmapsto \langle z| w\rangle$.

When $A$ is not semi-bounded but its essential spectrum has a gap containing a
number $a$, we could as well consider sequences $(\cL_n)$ which are only
$|A-a|^{1/2}$-regular. We have chosen to avoid mentioning quadratic forms for
operators which are not semi-bounded, because in practical applications (such as those involving the Dirac operator) the domain of $|A-a|^{1/2}$ does not necessarily coincide with
the natural domain upon which the quadratic form is defined. 

%%%%%%%%%%%%%%%%%%%%%%%%%%%%%%%

The limiting spectrum of $A$ relative to the Galerkin sequence 
$\cL$, is the set of all limit points, up to subsequences, of the spectra of $A_n$ in the large $n$ limit.

\begin{definition}[Limiting spectrum] \label{rel_spec}
The \emph{limiting spectrum of $A$} relative to $\Lc$, 
 $\Spec(A,\Lc)$, is the set of all $\lambda\in \R$ for which there exists  $\lambda_k\in \Spec(A_{n_k})$  such that $n_k\to\ii$ and $\lambda_k\to \lambda$ as $k\to\ii$.
\end{definition}

Since all $A_n$ are Hermitian endomorphisms,  
$\Spec(A,\Lc)\subset \mathbb{R}$.
The following lemma provides an alternative characterisation of $\Spec(A,\cL)$.

\begin{lemma}[$\cL$-Weyl sequences] \label{lem:quasi_ws}
The real number $\lambda\in \Spec(A,\Lc)$ if and only if 
there exists a  sequence $x_k\in\Lc_{n_k}$ such that $\left\|x_k\right\|=1$ and
$\pi_{n_k}\left(A-\lambda\right)x_{k}\to 0$ as $k\to\ii$. 
\end{lemma}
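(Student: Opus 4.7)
The plan is to prove both directions by exploiting the fact that each compression $A_{n_k}$ is a finite-dimensional Hermitian operator on $\cL_{n_k}$, so that its spectrum consists of eigenvalues and the standard normal-operator estimate $\|(A_{n_k}-\lambda)x\|\geq \operatorname{dist}(\lambda,\sigma(A_{n_k}))\|x\|$ is available. The key observation throughout is that whenever $x_k\in \cL_{n_k}$, one has $\pi_{n_k}x_k=x_k$, so that $\pi_{n_k}(A-\lambda)x_k=A_{n_k}x_k-\lambda x_k$ is a meaningful vector in $\cL_{n_k}$ in both the operator-domain and form-domain frameworks described in the preceding paragraphs.

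For the forward implication, I assume $\lambda\in\sigma(A,\cL)$, so by Definition~\ref{rel_spec} there exist $n_k\to\infty$ and $\lambda_k\in\sigma(A_{n_k})$ with $\lambda_k\to\lambda$. Since $A_{n_k}$ is a Hermitian endomorphism of a finite dimensional space, $\lambda_k$ is a genuine eigenvalue with an associated normalised eigenvector $x_k\in\cL_{n_k}$. Using $\pi_{n_k}x_k=x_k$, I compute
\[
\pi_{n_k}(A-\lambda)x_k \;=\; A_{n_k}x_k-\lambda x_k \;=\; (\lambda_k-\lambda)x_k,
\]
whose norm is $|\lambda_k-\lambda|\to 0$, so $(x_k)$ is the required sequence.

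For the converse, suppose $x_k\in\cL_{n_k}$ with $\|x_k\|=1$ and $\pi_{n_k}(A-\lambda)x_k\to 0$. Then $(A_{n_k}-\lambda)x_k\to 0$ in $\cL_{n_k}$. Applying the spectral theorem to the finite-dimensional Hermitian operator $A_{n_k}$ yields
\[
\operatorname{dist}\bigl(\lambda,\sigma(A_{n_k})\bigr) \;\leq\; \|(A_{n_k}-\lambda)x_k\| \;\longrightarrow\; 0.
\]
Hence, choosing $\lambda_k\in\sigma(A_{n_k})$ attaining this distance gives $\lambda_k\to\lambda$ with $n_k\to\infty$, and $\lambda\in\sigma(A,\cL)$ by Definition~\ref{rel_spec}.

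Neither direction presents a real obstacle; the only point that deserves care is to keep track of the fact that even in the semi-bounded, form-domain setting where $Ax_k$ a priori lives only in $\dom(A^{1/2})^{\#}$, the projected vector $\pi_{n_k}Ax_k\in\cL_{n_k}\subset\cH$ is genuinely defined (as emphasised in the paragraph following Definition~\ref{def:A-regular}), so the statement of the lemma and both arguments above retain their meaning without modification.
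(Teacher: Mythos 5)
Your proof is correct and follows essentially the same route as the paper: the forward direction takes a normalised eigenvector of $A_{n_k}$ and computes $\pi_{n_k}(A-\lambda)x_k=(\lambda_k-\lambda)x_k$, and the converse uses the Hermitian (normal-operator) estimate $\operatorname{dist}(\lambda,\sigma(A_{n_k}))\leq\|(A_{n_k}-\lambda)x_k\|$, exactly as in the paper.
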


\begin{proof}         %[Proof of Lemma \ref{lem:quasi_ws}]
According to the definition, $\lambda\in \Spec(A,\Lc)$ if and only if 
there exists $\lambda_k\in \R$ and $x_{k}\in \Lc_{n_k}$ with 
$\left\|x_{k}\right\|=1$ such that 
$\lambda_{k}\to \lambda$  and 
$\pi_{n_k}\left(A-\lambda_{k}\right)x_{k}= 0$. As $\pi_{n_k}\left(A-\lambda\right)x_{k}= (\lambda_k-\lambda)x_k\to0$, one  implication follows immediately.

On the other hand, let $(x_k)$ be as stated. Since the $A_n$ are Hermitian, there necessarily exists $\lambda_k\in \Spec(A_{{n_k}})$ such that $|\lambda_k-\lambda|\leq\norm{(A_{{n_k}}-\lambda)x_{k}}\to0$.
Thus $\lambda\in \Spec(A,\Lc)$ ensuring the complementary implication.
\end{proof}

We call $(x_{k})$ an \emph{$\cL$-Weyl sequence} for 
$\lambda\in\Spec (A,\Lc)$, by analogy to the classical notion of Weyl 
sequence \cite{Davies}.

\begin{remark} \label{1}
Selfadjointness of $A_n$  is crucial in Lemma~\ref{lem:quasi_ws}. We illustrate this by means of 
a simple example.  Let $\cH=\ell^2(\N)$ and $(e_j)\subset \cH$ be the canonical
orthonormal basis of
this space.  Let $A$ be the left shift operator defined by the condition 
$A:e_j\longmapsto e_{j-1}$ with the convention
$e_0=0$. Let $\Lc_k=\Span\left\{e_i, i\leq k\right\}$. For this data an analogous 
of Lemma~\ref{lem:quasi_ws} is no longer valid. Indeed, if $|\lambda|<1$
and
$$x_k:=\sqrt{\frac{1-|\lambda|^2}{1-|\lambda|^{2k}}}\sum_{i=1}^k \lambda^{i-1}
e_i,$$ 
then $x_k\in \Lc_k$, $\norm{x_k}=1$  and 
$$\norm{Ax_k-\lambda
x_k}=\sqrt{\frac{1-|\lambda|^2}{1-|\lambda|^{2k}}}\left|\lambda\right|^{k}
\to 0.$$ 
Therefore any point of the open unit disk is associated with an $\cL$-Weyl sequence.
On the other hand, however, $A_n$ is a Jordan block, so $\Spec(A_n)=\{0\}$ for all $n\in \N$ and hence necessarily $\Spec(A,\cL)=\{0\}$.\dqed
\end{remark}

The above characterisation of points in the limiting spectrum combined with the minimax principle yields the following fundamental statement.
\begin{proposition}[The limiting spectrum and the spectrum] \label{prop_2}
Let $\cL$ be an $A$-regular Galerkin sequence or, if $A\geq0$, an $A^{1/2}$-regular Galerkin sequence. Then, 
\begin{equation}
\label{basic_enclosure}
\Spec(A)\subset \Spec(A,\Lc)
\end{equation}
and 
\begin{equation}
\Spec_{\operatorname{poll}}(A,\cL):=\Spec(A,\Lc) \setminus \Spec(A)\subset \left(\ell^-\,,\,\ell^+\right)
\label{eq:convex_hull_essential_spectrum}
\end{equation}
where 
$$\ell^-:=\left\{\begin{array}{ll}
-\ii&\text{for $\inf\sigma(A)=-\ii$}\\
\inf \Spec_\ess(A)&\text{ otherwise}
\end{array}\right.$$
$$\ell^+:=\left\{\begin{array}{ll}
+\ii&\text{ for $\sup\sigma(A)=+\ii$}\\
\sup \Spec_\ess(A)&\text{ otherwise.}
\end{array}\right.$$
\end{proposition}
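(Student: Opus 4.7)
My plan distinguishes two parts: the completeness $\sigma(A)\subset\sigma(A,\cL)$, proved by turning an ordinary Weyl sequence for $A$ into an $\cL$-Weyl sequence and invoking Lemma~\ref{lem:quasi_ws}; and the confinement $\sigma_{\operatorname{poll}}(A,\cL)\subset(\ell^-,\ell^+)$, proved via min-max/max-min comparison of the eigenvalues of the compressions $A_n$ with those of $A$.

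For the first inclusion, fix $\lambda\in\sigma(A)$. For every $\epsilon>0$ the spectral projector $\1_{(\lambda-\epsilon,\lambda+\epsilon)}(A)$ is nonzero, producing a unit vector $\phi_\epsilon\in\dom(A)$ with $\|(A-\lambda)\phi_\epsilon\|<\epsilon$. In the $A$-regular case, graph-norm density yields $y_n\in\cL_n$ with $y_n\to\phi_\epsilon$ and $Ay_n\to A\phi_\epsilon$, so $\|\pi_n(A-\lambda)y_n\|\leq\|(A-\lambda)y_n\|$ can be made smaller than $2\epsilon$, and a diagonal extraction in $(\epsilon,n)$ followed by normalisation delivers an $\cL$-Weyl sequence. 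In the $A^{1/2}$-regular case with $A\geq0$ the graph-norm estimate is unavailable, so I take $y_n$ to be the Galerkin projection of $\phi_\epsilon$ onto $\cL_n$ in the form inner product $\langle u,v\rangle+\langle A^{1/2}u,A^{1/2}v\rangle$. Writing $a(u,v):=\langle A^{1/2}u,A^{1/2}v\rangle$ and using the orthogonality relation $a(y_n-\phi_\epsilon,g)=-\langle y_n-\phi_\epsilon,g\rangle$ for $g\in\cL_n$, together with $\phi_\epsilon\in\dom(A)$, a short computation gives the clean identity
\[
A_n y_n-\lambda y_n=\pi_n\bigl[(A-\lambda)\phi_\epsilon+(1+\lambda)(\phi_\epsilon-y_n)\bigr],
\]
so $\|A_n y_n-\lambda y_n\|_\cH\leq\|(A-\lambda)\phi_\epsilon\|+|1+\lambda|\,\|\phi_\epsilon-y_n\|$. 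Sending $n\to\ii$ at fixed $\epsilon$ (by $A^{1/2}$-regularity), then $\epsilon\to0$, and normalising, completes the argument.

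For the second inclusion, take $\lambda>\ell^+$; the case $\ell^+=+\ii$ is vacuous, and if $\lambda\in\sigma(A)$ then $\lambda\notin\sigma_{\operatorname{poll}}$ by definition, so I may assume $\ell^+=\sup\sigma_\ess(A)<\ii$ and $\lambda\in\rho(A)$. Above $\ell^+$ the spectrum of $A$ is discrete with possible accumulation only at $\ell^+$, so there are finitely many eigenvalues $\mu_1(A)\geq\mu_2(A)\geq\cdots$ of $A$ exceeding $\lambda$, and $\lambda$ sits at positive distance $\delta$ from $\sigma(A)$. The max-min principle applied from the top gives $\mu_k(A_n)\leq\mu_k(A)$ for each $k$, with the convention $\mu_k(A)=\ell^+$ when $A$ has fewer than $k$ eigenvalues above $\ell^+$. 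Approximating a top-$k$ eigenspace of $A$ by subspaces of $\cL_n$ via $A^{1/2}$-regularity gives the matching bound $\liminf_n\mu_k(A_n)\geq\mu_k(A)$, hence $\mu_k(A_n)\to\mu_k(A)$ for every $k$ with $\mu_k(A)>\ell^+$ (only finitely many such $k$). Therefore for $n$ large no eigenvalue of $A_n$ lies in $(\lambda-\delta/2,\lambda+\delta/2)$, and so $\lambda\notin\sigma(A,\cL)$. A symmetric argument at the bottom of the spectrum handles $\ell^-$.

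The principal obstacle is the $A^{1/2}$-regular case of part one: the naive estimate on $\|\pi_n(A-\lambda)y_n\|_\cH$ fails because vectors $g\in\cL_n$ of unit $\cH$-norm can carry arbitrarily large form norm $\|A^{1/2}g\|$, so merely approximating $\phi_\epsilon$ in form norm does not control the form term in the compression. Replacing this by the Galerkin form-projection converts the offending form-norm remainder into a harmless $\cH$-norm remainder through the orthogonality relation, and is the decisive step that makes the whole argument go through.
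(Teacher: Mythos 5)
Your argument is correct and follows the paper's own proof essentially verbatim: for \eqref{basic_enclosure} you turn a Weyl sequence for $\lambda\in\sigma(A)$ into an $\cL$-Weyl sequence via a diagonal extraction and Lemma~\ref{lem:quasi_ws}, and in the $A^{1/2}$-regular case you use exactly the paper's device of projecting onto $\cL_n$ in the form inner product so that the uncontrollable form-norm remainder becomes a harmless $\cH$-norm remainder. For \eqref{eq:convex_hull_essential_spectrum} the paper merely cites the classical minimax argument from the literature, and the max-min comparison of $\mu_k(A_n)$ with $\mu_k(A)$ that you write out is precisely that argument.
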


\smallskip

\begin{proof}
We start with the general case of an $A$-regular sequence. The classical characterisation of the spectrum of selfadjoint operators
ensures that $\lambda\in \Spec(A)$ if and only if there is a normalised 
sequence $(y_k)\subset \dom(A)$ such that $\|(A-\lambda)y_k\|\to 0$
(that is $(y_k)$ is a Weyl sequence for $\lambda$). We will now construct an 
$\cL$-Weyl sequence from $(y_k)$.
According to
\eqref{e:con_seq}, we can find $(x_m^{k})_{(k,m)\in\N^2}$ 
such that $x^k_m\in\Lc_{m}$, $(y_k-x^k_m)\to 0$ and $(A-\lambda)(y_k-x^k_m)\to 0$ 
as $m\to \infty$. By virtue of a diagonal process, we can extract  
a subsequence such that $\pi_{m_k}(A-\lambda)x_{m_k}^{k}\to 0$. Dividing by 
$\|x_{m_k}^k\|$ (which does not vanish in the $k\to\infty$ limit), 
gives \eqref{basic_enclosure} as consequence of Lemma~\ref{lem:quasi_ws}.

When $A\geq0$ and $\cL$ is only $A^{1/2}$-regular, a similar proof applies. We
explicitly take $x_m^{k}:=\pi'_m y_k$ where $\pi'_m$ is the
orthogonal projection onto $\cL_m$, for the scalar product furnished by the
quadratic form associated to $A$.
For all $z\in\cL_m$ with $\norm{z}=1$
\begin{align*}
\big|\pscal{z|(A-\lambda)x_m^{k}}|&= \big|\pscal{z|(A-\lambda)y_k}+(\lambda+1)\pscal{z|y_k-x_m^k}\big|\\
&\leq \norm{(A-\lambda)y_k}+(\lambda+1)\norm{y_k-x_m^k},
\end{align*}
where we have used that $\pscal{z|(A+1)y_k}=\pscal{z|(A+1)\pi'_my_k}$ by
definition of the projection $\pi'_m$. Thus
$$\norm{\pi_m(A-\lambda)x_m^{k}}\leq
\norm{(A-\lambda)y_k}+(\lambda+1)\norm{y_k-x_m^k}.$$
Our assumption that $\cL$ is $A^{1/2}$-regular implies that $x_m^k\to y_k$ in
$\dom(A^{1/2})$ when $m\to\ii$. Hence, the desired conclusion is achieved, once again, by a diagonal argument.

The proof of \eqref{eq:convex_hull_essential_spectrum} is a classical consequence of the minimax principle. It may be found, for instance, in 
\cite[Theorem 2.1]{LevSha-04} or \cite[Theorem 1.4]{LewSer-09}.
\end{proof}

 In  \cite[Theorem 1.4]{LewSer-09} the existence of an $A$-regular Galerkin sequence $\cL$ such that $\sigma(A,\cL)=[\ell^-,\ell^+]$ is shown. Therefore the inclusion complementary to \eqref{basic_enclosure} does not hold 
in general. This is a source of difficulties in applications as
there is no known systematic procedure able to identify $A$-regular 
Galerkin sequences such that $\Spec(A)=\Spec(A,\Lc)$. 
By virtue of \eqref{eq:convex_hull_essential_spectrum}, limiting spectral pollution 
$\Spec_{\mathrm{poll}}(A,\cL)$ can only occur in ``gaps'' of the essential spectrum.

%%%%%%%%%%%%%%%%%%%%%%%%%%%%%

Let us now see how $\Spec_{\operatorname{poll}}(A,\cL)$ can be characterised in a more precise manner in terms of particular $\cL$-Weyl sequences.

\begin{definition}[Limiting  essential spectrum] \label{rel_ess_spec}
We denote by $\Spec_{\rm ess}(A,\Lc)$ the set of all $\lambda\in\Spec(A,\cL)$ for which there exists an $\cL$-Weyl sequence $(x_k)$ as in Lemma~\ref{lem:quasi_ws} with the additional property that $x_k\wto0$.
\end{definition}

By analogy to the classical notions, we will call $\Spec_{\rm ess}(A,\Lc)$ the \emph{limiting essential spectrum of $A$} associated to $\Lc$ and the corresponding sequence $(x_k)$  a \emph{singular} $\cL$-Weyl sequence. 

\begin{remark}\label{rmk:ess_spectrum} 
From the definition it follows that
$\sigma_\ess(A+K,\cL)=\sigma_\ess(A,\cL)$ for any selfadjoint operator $K\in \Kc(\cH)$.\dqed
\end{remark}

\begin{definition}[Limiting  discrete spectrum] \label{rel_disc_spec}
The residual 
set $\Spec_{\rm disc}(A,\Lc)=\Spec(A,\Lc)\setminus \Spec_{\rm ess}(A,\Lc)$,
will be called the  \emph{limiting discrete spectrum of $A$} associated to $\Lc$. 
\end{definition}

We illustrate these definitions by means of various simple examples.

\begin{example}[$A$ a bounded operator] \label{ex0}
Let $\Hc=\Span\{e^\pm_n\}_{n\in\N}$ where $e^\pm_n$ is an orthonormal set of vectors in a given scalar product.
Let $\Lc_n=\Span\{e_1^\pm,\ldots,e_{n-1}^\pm,f_n\}$ where 
$f_n=(\cos\theta) e_n^++(\sin\theta) e_n^-$ for $\theta\in(0,\pi/2)$. Let\footnote{Here and elsewhere we use the bra-ket notation $|f\rangle\langle g|$ to denote the linear operator $\psi\mapsto \pscal{g,\psi}f$.}
\[
     A=\sum_{n\geq1}|e_n^+\rangle \langle e^+_n|,
\]
that is, $A$ is the orthogonal projector onto $\Span(e_n^+)$ and $\sigma(A)=\sigma_\text{ess}(A)=\{0,1\}$. Then $\sigma(A_n)=\{0,1,\cos^2\theta\}$ for all $n$ and $\sigma(A,\cL)=\sigma_\ess(A,\cL)=\{0,1,\cos^2\theta\}$. Here $x_n=e_n^-$ is a singular $\cL$-Weyl sequence associated to $\lambda=0$, $x_n=e_n^+$ is a singular $\cL$-Weyl sequence associated to $\lambda=1$
and $x_n=f_n$ is a singular $\cL$-Weyl sequence associated to $\lambda=\cos^2\theta$.\dqed
\end{example}

\begin{example}[$A$ a semi-bounded operator] \label{ex2}
 Let $\Hc$ be as in Example~\ref{ex0} and
define $$\Lc_n=\Span\{e_1^\pm,\ldots,e_{n-1}^\pm,e_n^-\}.$$ For
$f^\pm_n=\sin (\frac 1n)\, e_n^\mp \pm \cos (\frac 1n)\, e_n^\pm$, let
\[
     A=\sum n^2|f_n^+\rangle \langle f^+_n| - \sum |f_n^-\rangle \langle f^-_n|
\]
which has a $2\times 2$ block diagonal representation in the basis 
$(e^\pm_n)$. Then $\Spec_\ess(A)=\{-1\}$ and $\Spec_\disc(A)=\{n^2:n\in\N\}$. 
On the other hand 
\[
\Spec(A_n)=\left\{-1,n^2\sin^2\frac1n -\cos^2\frac 1n,1,\ldots,
(n-1)^2\right\},
\]
where $-1$ is an eigenvalue of multiplicity $n-1$. Therefore
\begin{gather*}
   \Spec_\ess(A,\Lc)=\{-1,0\} \qquad 
   \text{and}\qquad \Spec_\disc(A,\Lc)=\{n^2:n\in\N\}.
\end{gather*}
The former is a consequence of Proposition~\ref{prop:properties_of_rel_spec}-(ii)
while the latter follows from Proposition~\ref{prop:quasi_Weyl}-(iii) below.

We can verify directly the validity of the latter as follows. 
Assume that conversely $(x_k)$ was a singular $\cL$-Weyl sequence associated with 
$\nu^2\in\Spec_\disc(A)$. Then $\pi_{n_k}(A-\nu^2)x_k\to 0$ and $x_k\wto 0$. For $m<n_k$
\[
p_m \pi_{n_k}(A-\nu^2) x_k= (A-\nu^2)p_m  x_k
\]
where $p_m=\sum_{i\leq m} |f_i^\pm\rangle \langle f^\pm_i|$. Then, on the one hand, 
\[\|p_{n_k-1} x_k-\langle f^+_{\nu},x_k\rangle f_{\nu}^+\|^2\leq
\norm{(A-\nu^2)p_{n_k-1} x_k}^2 \to 0
\]
so that $\norm{p_{n_k-1} x_k}^2+\norm{(A-\nu^2) p_{n_k-1} x_k}^2\to 0$
as $k\to \infty$. On the other hand, 
\[(A-\nu^2)(x_k-\langle e^-_{n_k},x_k\rangle e_{n_k}^-)=(A-\nu^2)p_{n_k-1}x_k.\]
Since $\left|\langle (A-\nu^2)e_n^-,e_n^-\rangle\right|=\left|n^2\sin^2\frac1n+\cos^2\frac1n-\nu^2\right|\to \left|2-\nu^2\right|>0$, 
projecting each term onto $\Lc_{n_k}$ yields
$\langle e^-_{n_k},x_k\rangle\to 0$ also. But then $1=\norm{x_k}\to0$, which 
is a contradiction, so there are no singular $\cL$-Weyl sequences for
$\nu^2$. \dqed
\end{example}

\begin{example}[$A$ a strongly indefinite operator] \label{ex1}
Let $\Hc$ and $\Lc_n$ be as in Example~\ref{ex2}.
Let
$f^\pm_n=\frac1{\sqrt{2}}e_n^+\pm\frac1{\sqrt{2}}e_n^-$. Let
\[
     A=\sum n|f_n^+\rangle \langle f^+_n| - \sum n|f_n^-\rangle \langle f^-_n|.
\]
Then $\Spec(A)=\{\pm n:n\in\N\}=\Spec_\disc(A)$. On the other hand 
\[
  \Spec(A,\Lc)=\Z,  \qquad \Spec_\ess(A,\Lc)=\{0\}
   \qquad \text{and}\qquad \Spec_\disc(A,\Lc)=\{\pm n:n\in\N\}.
\]
The proof of the latter is similar to that of
the analogous property in Example~\ref{ex2}.
\dqed
\end{example}

%%%%%%%%%%%%%%%%%%%%%%%%%%%%%%%%

\section{Limiting spectra and the behaviour of singular $\cL$-Weyl sequences}
\label{sec2}
We now examine more closely various basic properties of the limiting spectra 
$\sigma(A,\cL)$, $\sigma_{\rm ess}(A,\cL)$ and $\sigma_{\rm disc}(A,\cL)$.  These properties can be deduced
via an analysis of the behaviour of different types of $\cL$-Weyl sequences. 

\begin{proposition}[Limiting essential and discrete spectra and the spectrum]
\label{prop:properties_of_rel_spec} Let $\cL$ be an $A$-regular 
Galerkin sequence, or,  if $A\geq0$, an $A^{1/2}$-regular Galerkin sequence. Then
\begin{itemize}
\item[(i)] the limiting spectrum $\sigma(A,\cL)$ and the limiting essential spectrum $\sigma_\ess(A,\cL)$ are closed subsets of $\R$; 
\medskip

\item[(ii)] moreover $\Spec_{\rm ess}(A)\subset \Spec_{\rm ess}(A,\Lc)$ and $\Spec_{\rm disc}(A,\Lc)\subset \Spec_{\rm disc}(A)$. 
\end{itemize}
\end{proposition}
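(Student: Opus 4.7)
The plan is to reduce the three assertions to three distinct extractions: a diagonal extraction for closedness of $\sigma_\ess(A,\cL)$, a second diagonal extraction for $\sigma_\ess(A)\subset\sigma_\ess(A,\cL)$, and a weak-limit argument followed by a selfadjointness identification for $\sigma_\disc(A,\cL)\subset\sigma_\disc(A)$. The easy part will be (i) and the first half of (ii); the real work is in the discrete inclusion.

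For (i), closedness of $\sigma(A,\cL)$ is a general fact about Kuratowski-type upper limits: from Definition~\ref{rel_spec} one has $\sigma(A,\cL)=\bigcap_{N\geq 1}\overline{\bigcup_{n\geq N}\sigma(A_n)}$, which is closed as an intersection of closed sets. For $\sigma_\ess(A,\cL)$, take $\lambda_j\in\sigma_\ess(A,\cL)$ converging to $\lambda$, pick a singular $\cL$-Weyl sequence $(x^{(j)}_k)_k$ at each $\lambda_j$, fix a countable dense set $\{\varphi_i\}\subset\cH$, and exploit $x^{(j)}_k\wto 0$ (for fixed $j$) to select $k_j$ so large that both $\max_{i\leq j}|\langle\varphi_i,x^{(j)}_{k_j}\rangle|$ and $\|\pi_{n^{(j)}_{k_j}}(A-\lambda_j)x^{(j)}_{k_j}\|$ are bounded by $1/j$. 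Writing $z_j:=x^{(j)}_{k_j}$, the identity $\pi_{n^{(j)}_{k_j}}(A-\lambda)z_j=\pi_{n^{(j)}_{k_j}}(A-\lambda_j)z_j+(\lambda_j-\lambda)z_j$ together with Lemma~\ref{lem:quasi_ws} and the density of $\{\varphi_i\}$ identifies $(z_j)$ as a singular $\cL$-Weyl sequence for $\lambda$.

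For $\sigma_\ess(A)\subset\sigma_\ess(A,\cL)$, I would take a classical singular Weyl sequence $(y_k)\subset\dom(A)$ for $\lambda\in\sigma_\ess(A)$, with $\|y_k\|=1$, $y_k\wto 0$, $(A-\lambda)y_k\to 0$, and reprise the approximation/diagonalisation step from the proof of Proposition~\ref{prop_2} to produce $x^k_{m_k}\in\cL_{m_k}$ with $\|x^k_{m_k}-y_k\|\to 0$ and $\pi_{m_k}(A-\lambda)x^k_{m_k}\to 0$. Preserving weak convergence is a bookkeeping step: for any $\varphi\in\cH$,
\[
|\langle\varphi,x^k_{m_k}\rangle|\leq|\langle\varphi,y_k\rangle|+\|\varphi\|\,\|x^k_{m_k}-y_k\|,
\]
and choosing the diagonal indices $m_k$ large enough forces $x^k_{m_k}\wto 0$. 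After renormalisation this is a singular $\cL$-Weyl sequence for $\lambda$.

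The main obstacle is $\sigma_\disc(A,\cL)\subset\sigma_\disc(A)$. Given $\lambda\in\sigma_\disc(A,\cL)$, pick an $\cL$-Weyl sequence $(x_k)\subset\cL_{n_k}$; by Definition~\ref{rel_ess_spec}, no subsequence of $(x_k)$ can tend weakly to $0$, so after extraction $x_k\wto x$ with $x\neq 0$. The task is to identify $x$ as an eigenvector of $A$ at eigenvalue $\lambda$, knowing a priori only that $x\in\cH$. For arbitrary $y\in\dom(A)$, $A$-regularity of $\cL$ furnishes $y_n\in\cL_n$ with $y_n\to y$ and $Ay_n\to Ay$ in norm; since $y_{n_k}\in\cL_{n_k}$, selfadjointness gives
\[
\langle(A-\lambda)y_{n_k},x_k\rangle=\langle y_{n_k},(A-\lambda)x_k\rangle=\langle y_{n_k},\pi_{n_k}(A-\lambda)x_k\rangle\to 0,
\]
whereas the left-hand side converges to $\langle(A-\lambda)y,x\rangle$. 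Hence $x\in\dom((A-\lambda)^{*})=\dom(A)$ and $(A-\lambda)x=0$, so $\lambda$ is an eigenvalue of $A$. The contrapositive of the first inclusion of (ii), already proved, then removes $\lambda$ from $\sigma_\ess(A)$, placing it in $\sigma_\disc(A)$. When $A\geq 0$ and $\cL$ is only $A^{1/2}$-regular, the same manipulation survives after replacing the graph approximation by a form approximation and interpreting the pairing $\langle y_{n_k},\pi_{n_k}(A-\lambda)x_k\rangle$ through the duality $\dom(A^{1/2})\times\dom(A^{1/2})^{\#}$ introduced in Section~\ref{sec1}.
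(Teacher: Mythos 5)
Your argument is correct and follows essentially the same route as the paper: the essential-spectrum inclusion is obtained by re-running the diagonal approximation from Proposition~\ref{prop_2} while tracking weak convergence, and your duality computation identifying the nonzero weak limit $x$ as an element of $\Ker(A-\lambda)$ is exactly the paper's auxiliary Lemma~\ref{le:ker}, stated inline. The only difference is that you supply the details of the diagonal extractions for part (i), which the paper leaves to the reader.
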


\smallskip

\begin{proof}         %[Proof of Proposition \ref{prop:properties_of_rel_spec}]
The proof of (i) involves a  standard  diagonal 
argument and it is left to the reader.
For the second statement we need the following auxiliary result
which will be used repeatedly below.
\begin{lemma} \label{le:ker}
A sequence $x_k\in \Lc_{n_k}$ is such that $\|x_k\|=1$, $x_k\wto x$ and
$\pi_{n_k}(A-\lambda)x_k\to 0$, only when $x\in \Ker(A-\lambda)$. 
\end{lemma}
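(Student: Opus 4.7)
The plan is to verify directly that $(A-\lambda)x = 0$ by testing against an arbitrary vector in $\dom(A)$ and moving $(A-\lambda)$ off of $x_k$ using the self-adjointness of $A$ together with the fact that $x_k$ lives in $\cL_{n_k}$.

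First I would fix an arbitrary $y\in\dom(A)$ and, invoking the $A$-regularity of $\cL$, pick $y_m\in\cL_m$ with $y_m\to y$ and $Ay_m\to Ay$. Next, since $y_{n_k}$ is norm-bounded and $\pi_{n_k}(A-\lambda)x_k\to 0$ in $\cH$ by hypothesis, one has
\[
\pscal{\pi_{n_k}(A-\lambda)x_k\,,\,y_{n_k}}\longrightarrow 0.
\]
Because $y_{n_k}\in\cL_{n_k}$ the projection $\pi_{n_k}$ may be removed, and because $x_k\in\cL_{n_k}\subset\dom(A)$ self-adjointness of $A$ transfers the operator:
\[
\pscal{\pi_{n_k}(A-\lambda)x_k,y_{n_k}}=\pscal{(A-\lambda)x_k,y_{n_k}}=\pscal{x_k,(A-\lambda)y_{n_k}}.
\]
The right-hand side is a pairing of a weakly convergent sequence $x_k\wto x$ with a norm-convergent sequence $(A-\lambda)y_{n_k}\to (A-\lambda)y$, so it tends to $\pscal{x,(A-\lambda)y}$. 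Combining the two computations yields $\pscal{x,(A-\lambda)y}=0$ for every $y\in\dom(A)$. By definition of the adjoint this places $x$ in $\dom(A^*)=\dom(A)$ with $(A-\lambda)x=0$, i.e.\ $x\in\Ker(A-\lambda)$.

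For the semi-bounded case in which $\cL$ is only $A^{1/2}$-regular, the same skeleton works with the quadratic form replacing the operator. Testing $\pi_{n_k}(A-\lambda)x_k$ against $y_{n_k}$ (where now $\|y_{n_k}-y\|+\|A^{1/2}(y_{n_k}-y)\|\to 0$) turns the pairing into $\pscal{A^{1/2}x_k,A^{1/2}y_{n_k}}-\lambda\pscal{x_k,y_{n_k}}$. One checks first that $\|A^{1/2}x_k\|^2=\pscal{\pi_{n_k}Ax_k,x_k}$ is bounded, so a subsequence of $A^{1/2}x_k$ converges weakly and the limit must be $A^{1/2}x$; passing to the limit in the form pairing then gives $\pscal{A^{1/2}x,A^{1/2}y}=\lambda\pscal{x,y}$ for every $y$ in the form domain, which by the first representation theorem places $x$ in $\dom(A)$ with $Ax=\lambda x$.

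The one delicate point is the legitimacy of the manipulation $\pscal{\pi_{n_k}(A-\lambda)x_k,y_{n_k}}=\pscal{x_k,(A-\lambda)y_{n_k}}$: it relies crucially on both $x_k$ and $y_{n_k}$ sitting in $\cL_{n_k}$, so the two projections $\pi_{n_k}$ can be dropped on either side. Once this is clear, the rest is a clean weak-strong limit argument, and this is the only real obstacle; the extension to the form case is mostly bookkeeping provided one first secures a uniform bound on $\|A^{1/2}x_k\|$.
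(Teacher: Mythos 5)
Your argument is correct and is essentially the paper's own proof: both test $\pi_{n_k}(A-\lambda)x_k$ against graph-norm approximants $y_{n_k}\in\cL_{n_k}$ of an arbitrary $y\in\dom(A)$, drop the projection, move $(A-\lambda)$ across by symmetry, and pass to the weak--strong limit to get $\pscal{x,(A-\lambda)y}=0$, hence $x\in\dom(A^*)=\dom(A)$ and $(A-\lambda)x=0$. The form-domain case is likewise handled in the paper exactly as you do, via the boundedness of $(x_k)$ in $\dom(A^{1/2})$.
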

\begin{proof}[Proof of Lemma \ref{le:ker}]
Suppose that $(x_k)$ satisfies the hypothesis with $\cL$ an $A$-regular Galerkin sequence. Let $f\in \dom(A)$ and $f_n\in \Lc_n$ 
such that $f_n\to f$ in the norm of $\dom(A)$.  Then
$
   \langle \pi_{n_k}(A-\lambda)x_k,f_{n_k}\rangle \to 0.
$
On the other hand, since $f_k\to f$ in $\dom(A)$,
\begin{align*}
   \langle \pi_{n_k}(A-\lambda)x_k,f_{n_k}\rangle&=
   \langle x_k,(A-\lambda) f_{n_k}\rangle  \to \langle x,(A-\lambda)f\rangle.
\end{align*}
Thus $\langle x,(A-\lambda)f\rangle=0$ for all $f\in \dom(A)$, so that
$x\in \dom(A^*)=\dom(A)$ and $(A-\lambda)x=0$ as required.

Suppose now that $A\geq0$ and $\cL$ is only $A^{1/2}$-regular. The hypothesis implies that
$(x_k)$ is a bounded sequence in $\dom(A^{1/2})$. Then the proof reduces to the 
same argument, but taking this time $f_{n_k}$ in $\dom(A^{1/2})$.
\end{proof}

We now turn to the proof of (ii) in Proposition~\ref{prop:properties_of_rel_spec}. The fact that $\Spec_{\rm ess}(A)\subset \Spec_{\rm ess}(A,\Lc)$ is proved similarly to  \eqref{basic_enclosure}. It should only be noted that the $\cL$-Weyl sequence
found for $\lambda\in \Spec_{\ess}(A)$ additionally satisfies $x_{m_k}^{k}\rightharpoonup 0$.
For the inclusion $\Spec_{\rm disc}(A,\Lc)\subset \Spec_{\rm disc}(A)$ note
that, if $\lambda\in \Spec_\disc (A,\Lc)$, there exists $x_k\in \Lc_{n_k}$ such
that
$\|x_k\|=1$, $x_k\wto x\not=0$ and $\pi_{n_k}(A-\lambda)x_k\to 0$. As $\lambda\not\in \Spec_\ess(A)$ (by the previous part), then either $\lambda\in \Spec_\disc(A)$ or
$\lambda\not\in \Spec(A)$. By Lemma~\ref{le:ker}, the latter is impossible. 
\end{proof}

\begin{remark}
If $\Spec_{\rm ess}(A)= \Spec_{\rm ess}(A,\Lc)$ then automatically $\Spec_{\rm disc}(A)= \Spec_{\rm disc}(A,\Lc)$ and $\Spec(A)= \Spec(A,\Lc)$. \dqed
\end{remark}

\smallskip

We will now examine more closely singular $\cL$-Weyl sequences associated to points $\lambda\in \sigma_{\mathrm{ess}}(A,\cL)$.

\begin{proposition}[Singular $\cL$-Weyl sequences]\label{prop:quasi_Weyl}
Let $\cL$ be an $A$-regular 
Galerkin sequence, or, if $A>0$, an $A^{1/2}$-regular Galerkin sequence.
The real number $\lambda\in\sigma_\ess(A,\cL)$ if and only if
\begin{itemize}
\item[(i)] either $\lambda\not\in \Spec(A)$ and there exists $\lambda_k\to\lambda$ and 
$y_k\in \Lc_{n_k}$ such that $y_k\wto 0$ and $\pi_{n_k}(A-\lambda_k)y_k=0$;
\item[(ii)] or $\lambda\in \Spec_\ess(A)$ and there exists $\lambda_k\to\lambda$ and 
$y_k\in \Lc_{n_k}$ such that $y_k\wto 0$ and $\pi_{n_k}(A-\lambda_k)y_k=0$;
\item[(iii)] or $\lambda\in \Spec_\disc(A)$ and for any $\varepsilon>0$ 
\[
     \mathrm{Rank}\left(\1_{(\lambda-\varepsilon,\lambda+\varepsilon)}(A_n)\right)\geq
     \mathrm{Rank}\left(\1_{\{\lambda\}}(A)\right)+1
\]
for all $n$ large enough.
\end{itemize}
\end{proposition}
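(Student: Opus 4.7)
The proof partitions $\R$ into the three exhaustive cases according to $\lambda$'s relation to $\sigma(A)$ and handles each direction of the equivalence. Throughout, set $m:=\dim\Ker(A-\lambda)$ (possibly $0$ or $\infty$).

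\medskip

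\textbf{Converse directions.} In (i) and (ii), the identity $\pi_{n_k}(A-\lambda)y_k=(\lambda_k-\lambda)y_k\to 0$ exhibits $(y_k)$ as a singular $\cL$-Weyl sequence for $\lambda$. In (iii), let $\epsilon_k\to 0$ be such that $V_k:=\mathrm{Ran}\,\1_{(\lambda-\epsilon_k,\lambda+\epsilon_k)}(A_{n_k})$ has dimension at least $m+1$. Since this exceeds $\dim\Ker(A-\lambda)$, one may pick a unit vector $y_k\in V_k$ orthogonal to $\Ker(A-\lambda)$. Then $\|\pi_{n_k}(A-\lambda)y_k\|\leq\epsilon_k\to 0$, and any weak limit $y$ of $(y_k)$ lies in $\Ker(A-\lambda)$ by Lemma~\ref{le:ker} while inheriting orthogonality to the same subspace (for $\phi\in\Ker(A-\lambda)$, $\langle y_k,\phi\rangle=0$), forcing $y=0$.

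\medskip

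\textbf{Forward direction, cases (i) and (iii).} Fix a singular $\cL$-Weyl sequence $(x_k)$. For (i) with $\lambda\notin\sigma(A)$, pick $\lambda_k\in\sigma(A_{n_k})$ closest to $\lambda$; then $|\lambda_k-\lambda|\leq\|(A_{n_k}-\lambda)x_k\|\to 0$, and any unit eigenvector $y_k$ of $A_{n_k}$ for $\lambda_k$ satisfies $y_k\wto 0$, since by Lemma~\ref{le:ker} any weak limit lies in $\Ker(A-\lambda)=\{0\}$. For (iii) with $\lambda\in\sigma_\disc(A)$, let $\{\phi_i\}_{i=1}^m$ be an orthonormal basis of $\Ker(A-\lambda)$ and $\phi_{i,n}\in\cL_n$ be $A$-regular approximations with $\phi_{i,n}\to\phi_i$ in graph norm. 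The subspace $U_k:=\Span\{x_k,\phi_{1,n_k},\ldots,\phi_{m,n_k}\}\subset\cL_{n_k}$ has Gram matrix converging to the identity (since $x_k\wto 0$ and $\phi_{i,n_k}\to\phi_i$ in norm), so $\dim U_k=m+1$ eventually, and every unit $v\in U_k$ satisfies $\|(A_{n_k}-\lambda)v\|\to 0$. Applying the minimax principle to $|A_{n_k}-\lambda|$ then yields at least $m+1$ eigenvalues of $A_{n_k}$ in $(\lambda-\varepsilon,\lambda+\varepsilon)$ for $k$ large enough.

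\medskip

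\textbf{Forward direction, case (ii): the main obstacle.} For $\lambda\in\sigma_\ess(A)$ one must upgrade the singular $\cL$-Weyl sequence to a sequence of \emph{genuine} eigenvectors of $A_{n_k}$ still converging weakly to zero. The essential-spectrum hypothesis gives $\dim\mathrm{Ran}\,\1_{(\lambda-\delta,\lambda+\delta)}(A)=\infty$ for every $\delta>0$, and an $A$-regular minimax comparison (of the same kind as in case (iii), with $N$ approximants in place of $m+1$) produces $N_n(\delta):=\dim\mathrm{Ran}\,\1_{(\lambda-\delta,\lambda+\delta)}(A_n)\to\infty$ for each fixed $\delta$. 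A diagonal procedure then furnishes $\delta_k\to 0$ with $N_k:=N_{n_k}(\delta_k)\to\infty$. Pick orthonormal eigenvectors $e_1^{(k)},\ldots,e_{N_k}^{(k)}$ of $A_{n_k}$ spanning $\mathrm{Ran}\,\1_{(\lambda-\delta_k,\lambda+\delta_k)}(A_{n_k})$, with eigenvalues $\lambda_j^{(k)}\in(\lambda-\delta_k,\lambda+\delta_k)$, and fix an orthonormal basis $\{\phi_i\}_{i\geq 1}$ of $\Ker(A-\lambda)$. Bessel's inequality gives $\sum_{j=1}^{N_k}\sum_{i=1}^K|\langle e_j^{(k)},\phi_i\rangle|^2\leq K$ for each fixed $K$, so some index $j_k^{(K)}$ satisfies $\sum_{i=1}^K|\langle e_{j_k^{(K)}}^{(k)},\phi_i\rangle|^2\leq K/N_k$. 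A second diagonal extraction in $K$ yields a single sequence $j_k$ for which $\langle e_{j_k}^{(k)},\phi_i\rangle\to 0$ for every fixed $i$. Setting $y_k:=e_{j_k}^{(k)}$ and $\lambda_k:=\lambda_{j_k}^{(k)}$ gives $\pi_{n_k}(A-\lambda_k)y_k=0$ and $\lambda_k\to\lambda$; any weak limit of $(y_k)$ lies in $\Ker(A-\lambda)$ by Lemma~\ref{le:ker} yet is orthogonal to every $\phi_i$, and hence vanishes.
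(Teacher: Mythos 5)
Your proof is correct, and in two places it takes a genuinely different route from the paper's. For the forward direction of (ii) the paper also first shows, via its Lemma~\ref{A} (essentially the same Gram-matrix transfer you use in case (iii)), that $A_{n_d}$ has at least $d^2$ eigenvalues within $2/d$ of $\lambda$; but it then extracts a weakly null sequence of eigenvectors by an inductive pigeonhole that makes each new eigenvector almost orthogonal to all \emph{previously chosen} ones, $|\pscal{y_i,y_j}|\leq 1/\sqrt{\max(i,j)}$, so that $y_k\wto0$ directly and no information about $\Ker(A-\lambda)$ is needed. You instead make the selected eigenvectors almost orthogonal to a fixed orthonormal basis of $\Ker(A-\lambda)$ (Bessel plus a double diagonal extraction) and then call on Lemma~\ref{le:ker} to place the weak limit inside $\Ker(A-\lambda)$, forcing it to vanish. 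Both are valid; yours unifies cases (i), (ii) and (iii) under the single principle ``weak limits lie in $\Ker(A-\lambda)$ and are orthogonal to it'', while the paper's pigeonhole is self-contained and indifferent to whether $\Ker(A-\lambda)$ is trivial, finite- or infinite-dimensional. Likewise, for the converse of (iii) the paper passes to weak limits $f_j$ of $d+1$ orthonormal eigenvectors and exhibits a vanishing linear combination $\sum a_jf_j=0$, whereas you simply pick a unit vector in the $(m+1)$-dimensional spectral subspace orthogonal to the $m$-dimensional kernel --- shorter, and again settled by Lemma~\ref{le:ker}. Two harmless remarks: your forward argument for (iii), exactly like the paper's, establishes the rank lower bound only along the subsequence $(n_k)$ carrying the singular $\cL$-Weyl sequence, so you lose nothing relative to the published proof; and in the $A^{1/2}$-regular setting the graph-norm approximants $\phi_{i,n}$ must be replaced by form-norm ones (e.g.\ the form-orthogonal projections $\pi_n'\phi_i$ used in the proof of Proposition~\ref{prop_2}), as the paper does when adapting Lemma~\ref{A}.
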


In cases (i) and (iii), $\lambda$ can in some sense be regarded as a point of
spectral pollution for $A$ relative to $\cL$. In case (iii), $\lambda\in
\Spec(A)$, but the multiplicity of the approximating spectrum $\sigma(A_n)$ is
too large for $n$ large, leading to the wrong spectral representation of $A$ in
the limit $n\to\ii$. In our definition of the polluted spectrum $\Spec_{\rm
poll}(A,\cL)$ in~\eqref{eq:convex_hull_essential_spectrum}, we have chosen to
require that $\lambda\notin\sigma(A)$, following~\cite{LewSer-09}. Any
$\lambda\in\sigma(A)$ satisfying (iii) could also be considered as a spurious
spectral point. However, in case (ii), the singular $\cL$-Weyl sequence
$(y_k)$ behaves like a classical singular Weyl sequence.

Only in cases (i) and (ii) the existence of a singular $\cL$-Weyl sequence $(y_k)$ consisting of exact eigenvectors of $A_{n_k}$ such that $\pi_{n_k}(A-\lambda_k)y_k=0$ and $\lambda_k\to\lambda$ is guaranteed. In case (iii) it may occur that all the eigenvectors of $A_{n_k}$ whose corresponding eigenvalue converges to $\lambda$, converge weakly  to a non-zero element of $\ker(A-\lambda)$, and that only a linear combination of these eigenvectors converges weakly to zero. This can be illustrated by means of a simple example.

\begin{example}[Spectral point satisfying Proposition~\ref{prop:quasi_Weyl}-(iii)] \label{two_definitions}
Let $\Hc=\Span\{e_0,e^\pm_n\}$ where $e_0,e^\pm_n$ form an orthonormal basis.
Let 
\[
A= \sum_{n=1}^\infty |e^+_n\rangle \langle e^+_n|
-\sum_{n=1}^\infty |e^-_n\rangle \langle e^-_n|.
\]
Then $\sigma(A)=\{-1,0,1\}$ and $\sigma_\ess(A)=\{-1,1\}$. The eigenvalue 0 has multiplicity one and associated eigenvector $e_0$.
Let
\begin{equation*}
   \Lc_{n}=\Span\{e_1^\pm,\ldots,e_{n-1}^\pm,f_n^\pm\}\quad  \text{ where}\quad 
f_n^\pm=\frac{e_0+\alpha_n^\pm\;e_n^+-\alpha_n^\mp\;e_n^-}{\sqrt{1+(\alpha_n^\pm)^2+(\alpha_n^\mp)^2}}
\end{equation*}
for
\[
    \alpha_n^\pm=\pm\sqrt{\frac{1\pm\frac{1}{n^2}}{2(1\mp\frac{1}{n^2})}}.
\]
Then $\Spec(A,\Lc)=\{0,\pm 1\}=\Spec_\ess(A,\Lc)$.
In this case $A_n$ has two eigenvalues approaching zero in the large $n$ limit, with corresponding eigenvectors $f_n^+$ and $f_n^-$. It is readily seen that
 $f_n^\pm\wto e_0/\sqrt{2}$ and so only
the difference $f_n^+-f_n^-$ tends weakly to zero.\dqed
\end{example}

\smallskip

\begin{proof}[Proof of Proposition \ref{prop:quasi_Weyl}] 
Let $\lambda\in\Specess(A,\cL)\subset\Spec(A,\cL)$. By definition of $\sigma(A,\cL)$ there exists a normalised sequence $(y_k)$ such that $\pi_{n_k}(A-\lambda_k)y_k=0$ and $\lambda_k\to\lambda$. The main question is whether one can ensure that $y_k\wto0$ weakly. Up to extraction of a subsequence, we may assume that $y_k\wto y\in \ker(A-\lambda)$ (by Lemma \ref{le:ker}).
If $\lambda\notin\sigma(A)$, then $\ker(A-\lambda)=\{0\}$ and necessarily $y=0$, thus (i) follows.

For the proof of (ii) we require the following auxiliary result.

\begin{lemma} \label{A}Let $\Vc\subset \dom(A)$ be a subspace of dimension $d>0$, with associated orthogonal projector $\pi_\Vc$. Let $\varepsilon>0$  be such that 
\[
    \|\pi_\Vc(A-\lambda)x\|\leq \varepsilon \|x\| \qquad
    \forall x\in \Vc.
\]
There exists $N>0$ and a sequence of spaces $\Wc_n\subset\cL_n$ of dimension $d$, such that for all $n\geq N$
\[
      \|\pi_{\Wc_n}(A-\lambda)y\|\leq 2\varepsilon \sqrt{d} \|y\| \qquad
\forall y\in \Wc_n.
\]
\end{lemma}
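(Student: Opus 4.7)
The plan is to transport an orthonormal basis of $\Vc$ into $\cL_n$ using $A$-regularity, and then read off the bound from the fact that the $d\times d$ matrix representing $\pi_{\Wc_n}(A-\lambda)|_{\Wc_n}$ will converge to the one representing $\pi_\Vc(A-\lambda)|_\Vc$, whose operator norm is already controlled by $\varepsilon$.

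More concretely: fix an orthonormal basis $v_1,\ldots,v_d$ of $\Vc$, and use Definition~\ref{def:A-regular} (or the form analogue when $A\geq0$) to produce, for each $i$, a sequence $v_i^{(n)}\in\cL_n$ with $v_i^{(n)}\to v_i$ in the graph norm. The Gram matrix $G_n=[\langle v_i^{(n)},v_j^{(n)}\rangle]_{ij}$ then tends to the $d\times d$ identity, so is invertible for $n$ large; applying Gram--Schmidt (or the more symmetric $G_n^{-1/2}$-orthonormalisation) yields an orthonormal family $w_1^{(n)},\ldots,w_d^{(n)}$ spanning a $d$-dimensional subspace $\Wc_n\subset\cL_n$, with $w_k^{(n)}\to v_k$ and $Aw_k^{(n)}\to Av_k$ as $n\to\infty$.

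Having set this up, I would compare the matrix representations of the two compressions in the bases $(v_k)$ and $(w_k^{(n)})$. Writing
\[
M_{kj}=\langle (A-\lambda)v_j,v_k\rangle,\qquad
M_{kj}^{(n)}=\langle (A-\lambda)w_j^{(n)},w_k^{(n)}\rangle,
\]
the hypothesis on $\Vc$ says $\|M\|_{\mathrm{op}}\leq\varepsilon$. Splitting $M_{kj}^{(n)}-M_{kj}$ as
\[
\langle (A-\lambda)(w_j^{(n)}-v_j),w_k^{(n)}\rangle+\langle (A-\lambda)v_j,w_k^{(n)}-v_k\rangle
\]
and using the graph-norm convergence together with boundedness of the $(A-\lambda)v_j$, every entry tends to zero, so $\|M^{(n)}-M\|_{\mathrm{op}}\to0$. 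Since for $y=\sum_k c_k w_k^{(n)}\in\Wc_n$ we have $\|y\|=\|c\|$ and $\|\pi_{\Wc_n}(A-\lambda)y\|=\|M^{(n)}c\|$, we obtain $\|\pi_{\Wc_n}(A-\lambda)y\|\leq (\varepsilon+o(1))\|y\|$ for $n$ large, which is more than enough for the stated bound $2\varepsilon\sqrt{d}$.

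The only real obstacle is making sure that $w_k^{(n)}\to v_k$ \emph{in the graph norm}, so that the entrywise convergence $M^{(n)}_{kj}\to M_{kj}$ can be justified; this is the reason I prefer the symmetric orthonormalisation $w_k^{(n)}=\sum_j (G_n^{-1/2})_{kj}v_j^{(n)}$, because the coefficients $(G_n^{-1/2})_{kj}\to\delta_{kj}$ depend continuously on $G_n$, and both $v_j^{(n)}\to v_j$ and $Av_j^{(n)}\to Av_j$ then transfer to $w_k^{(n)}$ and $Aw_k^{(n)}$ by finite linear combinations. In the $A^{1/2}$-regular case (when $A\geq0$) the same argument works verbatim once the inner products $\langle (A-\lambda)\cdot,\cdot\rangle$ are interpreted through the duality pairing $\langle\cdot\,|\,\cdot\rangle$ on $\dom(A^{1/2})^\#\times\dom(A^{1/2})$, since the approximation is then $v_i^{(n)}\to v_i$ in $\dom(A^{1/2})$ and $v_i\in\dom(A)$ still guarantees $(A-\lambda)v_i\in\Hc$.
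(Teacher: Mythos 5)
Your proof is correct and follows essentially the same construction as the paper's: approximate an orthonormal basis of $\Vc$ in the graph norm, orthonormalise symmetrically via $G_n^{-1/2}$, and compare the resulting compressions. The only difference is in the last step, where the paper estimates $\|\pi_{\Wc_n}(A-\lambda)f_j^n\|\leq 2\varepsilon$ vector by vector and pays a factor $\sqrt{d}$ through the triangle inequality, whereas your operator-norm comparison of the two $d\times d$ matrices gives the sharper bound $\varepsilon+o(1)$, which of course implies the stated one.
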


\smallskip

\begin{proof}[Proof of Lemma \ref{A}]
We firstly assume that $\cL$ is $A$-regular.
Let $(e_j)$ be a fixed orthonormal basis of $\Vc$. Then there exists $e_j^n\in\cL_n$ such that $\norm{e_j^n-e_j}_{\dom(A)}\to0$ when $n\to\ii$. The Gram matrix $G_n:=(\pscal{e^n_i,e^n_j})_{1\leq i,j\leq d}$ converges to the $d\times d$ identity matrix as $n\to\ii$, and therefore, for sufficiently large $n$, $\Wc_n:={\rm span}\{e^n_j,\ j=1,...,d\}$ has dimension $d$. Now we define an orthonormal basis for $\Wc_n$ by
$$f_j^n:=\sum_{k=1}^d(G_n^{-1/2})_{kj}\;e^n_k.$$
Since $(G_n^{-1/2})_{kj}\to\delta_{kj}$ and $e^n_j\to e_j$ in the graph norm, it is then clear that 
\[
    \|e_j-f_j^n\|_{\dom(A)}\to 0 \qquad n\to \infty.
\] 
This shows in particular that $\|(\pi_{\Wc_n}-\pi_\Vc)A\|\to0$, $\|\pi_{\Wc_n}-\pi_\Vc\|\to0$ and hence that
$\|\pi_{\Wc_n}(A-\lambda)f_j^n-\pi_\Vc(A-\lambda)e_j\|\to0$.
Let $N>0$ be such that 
\[
      \|\pi_{\Wc_n}(A-\lambda)f_j^n\|\leq 2 \varepsilon \qquad \forall n\geq N,\quad j=1,\ldots,d.
\]
For $y=\sum_{j=1}^d\hat{y}_jf_j^n \in \Wc_n$, we get
\[
     \|\pi_{\Wc_n}(A-\lambda) y\|\leq 2\varepsilon \sum_{j=1}^d|\hat{y}_j|\leq
     2\varepsilon \sqrt{d} \|y\|,
\]
which ensures the desired property.

When $A\geq0$ and $\cL$ is only $A^{1/2}$-regular, the proof is the same, using
convergence in $\dom(A^{1/2})$ and the fact that
$\|(\pi_{\Wc_n}-\pi_\Vc)A^{1/2}\|\to0$
\end{proof}

The proof of (ii) in Proposition~\ref{prop:quasi_Weyl} is achieved as follows. Assume that $\lambda\in\sigma_\ess(A)$. For all $d\in \N$ there exists a subspace $\Vc_d\subset \dom(A)$, such that $\dim \Vc_d=d^2$ and
\[
     \|(A-\lambda) y\|\leq \frac{1}{d^2}\|y\| \qquad \forall y\in \Vc_d,
\]      
see for instance \cite[Lemma~4.1.4]{Davies}.
According to Lemma~\ref{A} and an inductive argument, there is a sequence $(n_d)\subset \N$ and $d^2$-dimensional subspaces $\Wc_d\subset \Lc_{n_d}$, such that
\[
     \|\pi_{n_d} (A-\lambda) y\|\leq \frac{2}{d} \|y\| 
     \qquad
      \forall y\in \Wc_{d}.
\]
This ensures that $A_{n_d}$ has at least $d^2$ eigenvalues in the interval 
$\big[\lambda-{2}/{d},\lambda + {2}/{d}\big]$.

Let $(f_j^{n_d})_{j=1}^{d^2}\subset \Lc_{n_d}$ be an orthonormal set of $d^2$ eigenvectors of  $A_{n_d}$, with associated eigenvalues $(\lambda_j^{n_d})_{j=1}^{d^2}$
satisfying $|\lambda_j^{n_d}-\lambda|\leq {2}/{d}$. We inductively define the following singular $\cL$-Weyl sequence for $\lambda$:
$$\begin{array}{rll}
 y_1&=f_1^{n_1} \\
 y_2&=f_{\delta_2}^{n_2} & \text{ with } 1\leq \delta_2\leq 2^2\text{ such that } |\langle y_2,y_1\rangle |\leq 1/\sqrt{2} \\
 y_3&=f_{\delta_3}^{n_3} & \text{ with }1\leq \delta_3\leq 3^2\text{ such that } 
 |\langle y_3,y_j\rangle |\leq {1}/{\sqrt{3}}\text{ for }j=1,2 \\
 &\vdots & \\
 y_d&=f_{\delta_d}^{n_d} & \text{ with }1\leq \delta_d\leq d^2\text{ such that }
 |\langle y_d,y_j \rangle | \leq {1}/{\sqrt{d}}\text{ for } j=1,\ldots,d-1    .
\end{array}$$
The existence of $\delta_d$ is guaranteed by the fact that
$$
\forall k=1,...,d-1,\qquad      1=\|y_k\|^2\geq \sum_{j=1}^{d^2} |\langle y_k,f_j^{n_d}\rangle|^2.
$$
Indeed, there are at most $d$ indices $j$ in the above summation, such that $|\langle y_k,f_j^{n_d}\rangle|^2\geq 1/d$. Hence, in total, there are at most $d(d-1)$ indices $j$ such that $|\langle y_k,f_j^{n_d}\rangle|^2\geq1/d$ for at least one $k=1,...,d-1$.
Since $d(d-1)<d^2$ for $d\geq1$, we deduce that there is at least one index $j=:\delta_d$ such that $|\langle y_k,f_j^{n_d}\rangle|^2\leq1/d$ for all $k=1,...,d-1$. 
By construction $\|y_d\|=1$ and $|\pscal{y_i,y_j}|\leq 1/\sqrt{\max(i,j)}$. Thus $y_k\wto0$ as $k\to\ii$, ensuring (ii). 

Note that, conversely, if (i) or (ii) holds true, then $\lambda\in\sigma_\ess(A,\cL)$ by Definition~\ref{rel_ess_spec}.

Let us now prove that if $\lambda\in \Spec_\ess(A,\cL)\cap\Spec_\disc(A)$, then (iii) holds true. 
Let $x_k\in\Lc_{n_k}$ be a singular $\cL$-Weyl sequence: $\pi_{n_k}(A-\lambda)x_k\to0$, $\norm{x_k}=1$ and $x_k\wto0$. 
Let $\Vc=\ker(A-\lambda)\not=\{0\}$ and $d=\dim(\Vc)$. For $n$ sufficiently large
there is a space $\Wc_n\subset\cL_n$ of dimension $d$ such that for all $\varepsilon>0$, there exists
$N>0$ such that
\[
     \|\pi_n(A-\lambda)y\|\leq \varepsilon \|y\| \qquad \forall y\in \Wc_n
\]
whenever $n\geq N$.  Let $\Sc_k=\Span\{\Wc_{n_k},x_{k}\}$. Since $x_k\wto 0$
and 
$\Wc_{n_k}$ does not increase in dimension in the large $k$ limit, necessarily
$\dim(\Sc_{k})=d+1$ for all $k$ large enough. For all $\varepsilon>0$ there exists
$M>0$ such that
\[
     \|\pi_{n_k}(A-\lambda)y\|\leq \varepsilon \|y\| \qquad \forall y\in \Sc_k
\]
whenever $k\geq M$. This ensures that $\Spec(A_{n_k})\cap (\lambda-\varepsilon,\lambda+\varepsilon)$ contains at least $d+1$ points counting multiplicity and hence the
claimed conclusion is achieved.

It only remains to prove that (iii) implies $\lambda\in\sigma_\ess(A,\cL)$. 
Each individual eigenvector of $A_{n_k}$ might not converge weakly 
to $0$,  however there  is a linear combination of them that does it. We 
prove this as follows. Let $(f_j^k)_{j=1}^{d+1}$ be an orthonormal set  of $d+1$ eigenvectors
\[
       A_{n_k}f_j^k=\lambda_j^kf_j^k \qquad j=1,\ldots,d+1.
\]
Up to extraction of subsequences we may assume that $f_j^{k}\wto f_j\in\ker(A-\lambda)$ for all 
$j=1,\ldots,d+1$. If $f_j=0$ for some $j$, then the desired conclusion follows. Otherwise, 
since $\dim\ker(A-\lambda)=d$, there exist coefficients $(a_j)\in\C^{d+1}\setminus\{0\}$ such that
$\sum_{j=1}^{d+1}a_jf_j=0$.
Therefore, we may take
$$y_k:=\frac{\sum_{j=1}^{d+1}a_jf_j^{k}}{\sqrt{\sum_{j=1}^{d+1}|a_j|^2}}$$
as singular $\cL$-Weyl sequence for $\lambda$.
This completes the proof of Proposition \ref{prop:quasi_Weyl}.
\end{proof}

%%%%%%%%%%%%%%%%%%%%%%%%%%%%%%%%%%%%%%%%%%%%%%%%%%%%%%
\section{Mapping of the limiting spectra} \label{sec3}
In this section we establish mapping theorems for the different limiting spectra. They are a natural generalisation of the analogous well-known result for $\sigma(A)$ and $\sigma_\ess(A)$ (see for example \cite[Section XIII.4]{ReeSim4}).
From now on we assume that $A$ is bounded from below
and we take $\cL$ to be an $(A-a)^{1/2}$-regular Galerkin sequence with
$a<\inf\Spec(A)$.

\begin{theorem}[Mapping of the limiting spectra] \label{thm:mapping}
Let $A$ be semi-bounded from below and let $a<\inf\sigma(A)$. Assume that $\cL$
is an $(A-a)^{1/2}$-regular Galerkin sequence. Then 
\begin{equation}
\lambda \in \Spec( A, \Lc) 
\quad\Longleftrightarrow \quad
(\lambda-a)^{-1} \in \Spec\left((A-a)^{-1}, \Gc\right)
\label{mapping:spectrum} 
\end{equation}
and
\begin{equation}
\lambda \in \Spec_\ess( A, \Lc) 
\quad\Longleftrightarrow \quad
(\lambda-a)^{-1} \in \Spec_\ess\left((A-a)^{-1}, \Gc\right)
\label{mapping:essential_spectrum} 
\end{equation}
where $\Gc=\big((A-a)^{1/2}\Lc_{n}\big)_{n\in\N}$.
\end{theorem}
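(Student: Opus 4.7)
Set $B := (A-a)^{-1}$, a bounded, strictly positive selfadjoint operator. The strategy is to establish an exact finite-dimensional spectral bijection between $A_n$ on $\cL_n$ and the compression $B_n^\Gc$ of $B$ to $\Gc_n$, then transport both statements using the homeomorphism $\lambda \mapsto (\lambda-a)^{-1}$ of $(a,\infty)$ onto $(0,\infty)$. As a preliminary I would verify that $\Gc$ is a legitimate Galerkin sequence for $B$: since $B$ is bounded, this reduces to $\cH$-density, which I would obtain by writing $f \in \cH$ as $(A-a)^{1/2}h$ with $h = (A-a)^{-1/2}f \in \dom((A-a)^{1/2})$ and approximating $h$ in the form norm by elements of $\cL_n$, so that $(A-a)^{1/2}h_n \in \Gc_n$ converges to $f$ in $\cH$.

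The heart of the argument is the following identity. Let $T_n := (A-a)^{1/2}\rest_{\cL_n}$, which is an algebraic bijection $\cL_n \to \Gc_n$. For $x,z \in \cL_n$ with $y := T_n x$ and $w := T_n z$, using $\mu(\lambda-a) = 1$, the definition $\langle (A-a)x | z\rangle = \langle (A-a)^{1/2}x, (A-a)^{1/2}z\rangle$, and $By = (A-a)^{-1/2}x \in \cH$, a short calculation yields
\[
\langle (B-\mu)y, w\rangle_\cH = -\mu\,\langle (A-\lambda)x | z\rangle.
\]
As $z$ ranges over $\cL_n$, equivalently $w = T_n z$ over $\Gc_n$, this shows $\pi_n(A-\lambda)x = 0$ if and only if the compression of $(B-\mu)y$ to $\Gc_n$ vanishes. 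Thus $T_n$ induces a multiplicity-preserving bijection of eigenspaces, and since $0 \notin \sigma(B_n^\Gc)$ (because $B > 0$), we obtain $\sigma(A_n) = \{\,a + \xi^{-1} : \xi \in \sigma(B_n^\Gc)\,\}$. Identity \eqref{mapping:spectrum} then follows by passing to the Hausdorff limit: the Rayleigh quotient forces $\sigma(A_n) \subset [\inf\sigma(A), \infty) \subset (a,\infty)$, a region on which $\lambda \mapsto (\lambda-a)^{-1}$ is a homeomorphism, so limits translate cleanly in both directions.

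For \eqref{mapping:essential_spectrum} I would split via the three cases of Proposition~\ref{prop:quasi_Weyl}. In cases (i) and (ii) a singular $\cL$-Weyl sequence can be chosen to consist of exact eigenvectors $y_k \in \cL_{n_k}$ of $A_{n_k}$ with eigenvalues $\lambda_k \to \lambda$, $\|y_k\|=1$ and $y_k \wto 0$. Set $z_k := T_{n_k}y_k / \|T_{n_k}y_k\|$: the squared normalisation $\|T_{n_k}y_k\|^2 = \langle (A-a)y_k | y_k\rangle = \lambda_k - a \to \lambda-a > 0$ is bounded below, the $z_k$ are exact eigenvectors of $B_{n_k}^\Gc$ at $\xi_k \to \mu$, and for every $g$ in the dense subspace $\dom((A-a)^{1/2})\subset\cH$ one has $\langle z_k, g\rangle \propto \langle y_k, (A-a)^{1/2}g\rangle \to 0$, so $z_k \wto 0$. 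This yields $\mu \in \sigma_\ess(B,\Gc)$ via the corresponding case for $B$. In case (iii) $\lambda \in \sigma_\disc(A)$, and the rank inequality on $A_n$ transfers verbatim to $B_n^\Gc$ through the multiplicity-preserving spectral bijection; moreover $\dim \ker(B-\mu) = \dim \ker(A-\lambda)$ as $\mu \neq 0$, so Proposition~\ref{prop:quasi_Weyl}(iii) applied to $B$ delivers $\mu \in \sigma_\ess(B,\Gc)$. The reverse implications in all three cases are symmetric, using $T_n^{-1}$ and the weak continuity of the bounded operator $(A-a)^{-1/2}$.

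The main obstacle is to ensure the central identity survives the form-domain framework, where $x$ lies only in $\dom((A-a)^{1/2})$ rather than in $\dom(A)$. This goes through because $\langle (A-a)x | z\rangle$ is by definition the form pairing $\langle (A-a)^{1/2}x, (A-a)^{1/2}z\rangle$, and $By = (A-a)^{-1/2}x$ sits in $\cH$, so the entire computation takes place in $\cH$ through $(A-a)^{\pm 1/2}$. A secondary subtlety is that the estimates derived from this identity naturally control a form-dual norm of $\pi_n(A-\lambda)x$ rather than its $\cH$-norm; this is precisely why I would route the essential-spectrum part through the eigenvector characterisation of Proposition~\ref{prop:quasi_Weyl}, rather than attempting to transform arbitrary $\cL$-Weyl sequences directly.
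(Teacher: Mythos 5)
Your argument is correct, and while the first half coincides with the paper's, the second half takes a genuinely different route. For \eqref{mapping:spectrum} your finite-dimensional identity $\langle (B-\mu)y,w\rangle=-\mu\langle (A-\lambda)x|z\rangle$ with $y=(A-a)^{1/2}x$, $w=(A-a)^{1/2}z$ is exactly the paper's Lemma~\ref{lem:SpectralMappping}, giving the multiplicity-preserving bijection $\sigma(A_n)=\{a+\xi^{-1}:\xi\in\sigma(p_n(A-a)^{-1}\rest_{\Gc_n})\}$. For \eqref{mapping:essential_spectrum}, however, the paper does \emph{not} transport Weyl sequences: it proves an intersection characterisation $\sigma_\ess(A,\cL)=\bigcap_{B\in\Fc^\pm(A)}\sigma(A+B,\cL)$ (Lemma~\ref{alt_def_rel_ess}) together with invariance of the limiting essential spectrum under the compact change of Galerkin sequence $\Gc\mapsto(1+K)\Gc$ (Lemma~\ref{comp_pert_identity}), and then reduces \eqref{mapping:essential_spectrum} to repeated applications of \eqref{mapping:spectrum}. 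You instead push singular $\cL$-Weyl sequences through $T_n=(A-a)^{1/2}\rest_{\cL_n}$ directly, using the three-case description of Proposition~\ref{prop:quasi_Weyl}: in cases (i)--(ii) the sequence consists of exact eigenvectors, the normalisation $\|T_{n_k}y_k\|^2=\lambda_k-a$ stays away from $0$ and $\infty$, and weak nullity passes through $(A-a)^{\pm1/2}$ tested against the dense subspace $\dom((A-a)^{1/2})$ (respectively by boundedness of $(A-a)^{-1/2}$ in the reverse direction); in case (iii) the rank inequality and $\dim\ker(B-\mu)=\dim\ker(A-\lambda)$ transfer through the spectral bijection. This is sound, and you correctly identify the trap that forces the detour through exact eigenvectors: the bijection only controls the form-dual norm of $\pi_n(A-\lambda)x$, so arbitrary $\cL$-Weyl sequences cannot be transported. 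What your route buys is a shorter, self-contained and more concrete proof of the mapping theorem; what the paper's route buys is that Lemmas~\ref{alt_def_rel_ess} and~\ref{comp_pert_identity} are needed again, essentially verbatim, in the proof of Theorem~\ref{thm:Weyl}, so the detour is not wasted there. One small point to make explicit if you write this up: the $B$-regularity of $\Gc$ (which you do verify via density of $(A-a)^{1/2}\cL_n$ in $\cH$) is required before Proposition~\ref{prop:quasi_Weyl} may be applied to the bounded operator $(A-a)^{-1}$.
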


\smallskip

\begin{remark} \label{remark5}
Recall that a selfadjoint operator $A$ is unbounded ($\dom(A)\subsetneq \Hc$) if and only if $0\in \Spec((A-a)^{-1})$ for one (hence for all) $a\not\in\Spec(A)$. As it turns out, $A$ is unbounded if and only if $0\in \Spec_\ess((A-a)^{-1},\Gc)$ for one (and hence all) $a<\min\Spec(A)$ and $(A-a)^{1/2}$-regular sequence $\cL$. Formally in Theorem~\ref{thm:mapping} this corresponds to the case $+\infty\in\sigma(A)$ and $(+\infty-a)^{-1}=0$.\dqed
\end{remark}

Evidently a result analogous to Theorem~\ref{thm:mapping} can be established when $A$ is semi-bounded from above. However, here $A$ is required to be semi-bounded, in order to be able to use 
a square root $(A-a)^{1/2}$ in the definition of $\Gc$, and also
for a more fundamental reason.  When $a$ is in a gap of the essential spectrum, it would be natural to expect an extension of the above result by considering, for example,
$\Gc=\big(|A-a|^{1/2}\Lc_n\big)_{n\in\N}$. The following
shows that this extension is not possible in general.

\begin{example}[Impossibility of extending Theorem~\ref{thm:mapping} for $A$
strongly indefinite] \label{ex:imposs_th7}
Let $\Hc$ be as in Example~\ref{ex2}.
Define
$\Lc_n=\Span\{e_1^\pm,\ldots,e_{n-1}^\pm,\cos(\theta_n)\,e_n^++\sin(\theta_n)\,
e_n^-\}$ with $\theta_n:=\pi/4-\lambda/(2n)$ for a fixed $\lambda\in
(0,1)$. Let
\[
     A=\sum n|e_n^+\rangle \langle e^+_n| - \sum n|e_n^-\rangle \langle e^-_n|.
\]
Then $\Spec(A)=\{\pm n:n\in\N\}=\Spec_\disc(A)$. On the other hand
\begin{equation*}
  \Spec(A,\Lc)=\Spec(A) \cup\{\lambda\},  
\qquad \Spec_\ess(A,\Lc)=\{\lambda\} \quad
    \text{and} \quad \Spec_\disc(A,\Lc)=\Spec(A).
\end{equation*}
Now
\[
     A^{-1}=\sum n^{-1}|e_n^+\rangle \langle e^+_n| - n^{-1}|e_n^-\rangle \langle e^-_n|
\]
and $\Gc=\sqrt{|A|}\cL=\cL$.
Since $A^{-1}$ is compact we have 
\begin{equation*}
 \Spec(A^{-1},\Gc)=\sigma(A^{-1})\quad \text{and} \quad\Spec_\ess(A^{-1},\Gc)=\Spec_\ess(A^{-1})=\{0\}.
\end{equation*}
Thus $\lambda\in\Spec_\ess(A,\Lc)$ whereas
$1/\lambda\not\in\Spec(A^{-1},\Gc)$.
\dqed
\end{example}

In fact the following example shows that no general extension of this theorem is possible  whenever 
$a$ lies in the convex hull of the essential spectrum, even for $A\in \mathcal{B}(\cH)$.

\begin{example}[Impossibility of extending Theorem~\ref{thm:mapping} for
$a\in\operatorname{Conv}\{\Spec_\ess(A)\}$]
Let $\Hc=L^2(-\pi,\pi)$ and $Af(x)=\operatorname{sgn}(x)f(x)$ for 
all $f\in \Hc$.
Then $\Spec(A)=\Specess(A)=\{\pm1\}$. If $\cL$ is any $A$-regular sequence, then
$\Spec(A,\cL)\subset [-1,1]$. Fixing $a=0$ yields $(A-a)^{-1}=A$. Thus
also $\Spec(A^{-1})=\{\pm1\}$ and $\Spec((A-a)^{-1},\Gc)\subset [-1,1]$, for any $A$-regular sequence
$\Gc$. If we construct a sequence $\cL=(\cL_n)$ leading to a spurious eigenvalue $\lambda\in\sigma_{\rm ess}(A,\cL)\cap(-1,1)$, we will always have $\lambda^{-1}\notin \sigma_{\rm ess}(A^{-1},\Gc)$, no matter what $\Gc$ is. We thus see that Theorem~\ref{thm:mapping} cannot be extended to include $a$ in the convex hull of the essential spectrum.\dqed
\end{example}

\smallskip

\begin{proof}[Proof of Theorem \ref{thm:mapping}]
Statement \eqref{mapping:spectrum} will follow immediately 
from the next result.

\begin{lemma}[Mapping for the spectrum of compressions]
\label{lem:SpectralMappping}
Let $A$ be semi-bounded from below, let $a<\inf\sigma(A)$ and $\Lc_n\subset \dom((A-a)^{1/2})$.
Then 
\[\lambda \in \Spec(\pi_n A\rest_{\Lc_n}) 
\quad\Longleftrightarrow \quad
(\lambda-a)^{-1} \in \Spec(p_n (A-a)^{-1}\rest_{\Gc_n})\]
where $\Gc_n=(A-a)^{1/2}\Lc_{n}$ and $p_n$ is the associated orthogonal 
projector.
\end{lemma}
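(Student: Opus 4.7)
The plan is to identify, for each real $\lambda$, the kernel of the compression $\pi_n A\rest_{\Lc_n}-\lambda$ with that of $p_n(A-a)^{-1}\rest_{\Gc_n}-(\lambda-a)^{-1}$, by means of the linear bijection
\[
T\rest_{\Lc_n}:\Lc_n\longrightarrow \Gc_n,\qquad T:=(A-a)^{1/2}.
\]
Since $a<\inf\sigma(A)$, the operator $T$ is a positive selfadjoint operator on $\cH$ with bounded inverse, so its restriction to $\Lc_n$ is indeed a bijection onto $\Gc_n=T\Lc_n$.

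First I would translate both eigenvalue conditions into sesquilinear form identities. For $x,z\in\Lc_n$, using that the quadratic form of $A$ on $\dom(T)$ is $\pscal{Tx,Tz}+a\pscal{x,z}$, we have
\[
\pscal{\pi_n(A-\lambda)x,z}=\pscal{Tx,Tz}-(\lambda-a)\pscal{x,z}.
\]
Setting $y=Tx$, $w=Tz\in\Gc_n$ and using $(A-a)^{-1}=T^{-2}$ together with selfadjointness of $T$,
\[
\pscal{p_n((A-a)^{-1}-\mu)y,w}=\pscal{x,z}-\mu\pscal{Tx,Tz}.
\]
Choosing $\mu=(\lambda-a)^{-1}$, which is legitimate because any eigenvalue of $\pi_n A\rest_{\Lc_n}$ satisfies $\lambda\geq\inf\sigma(A)>a$ by the variational principle (the equivalence being trivially true when $\lambda\leq a$, since neither side is realised), we obtain the proportionality
\[
\pscal{p_n((A-a)^{-1}-(\lambda-a)^{-1})Tx,Tz}=-(\lambda-a)^{-1}\pscal{\pi_n(A-\lambda)x,z}.
\]

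The conclusion is then immediate. Since $\pi_n(A-\lambda)x\in\Lc_n$, it vanishes if and only if it is orthogonal to every $z\in\Lc_n$; likewise $p_n((A-a)^{-1}-(\lambda-a)^{-1})Tx\in\Gc_n$ vanishes if and only if it is orthogonal to every $w\in\Gc_n$, i.e.\ to every $Tz$ with $z\in\Lc_n$. Hence $x\in\ker(\pi_n A\rest_{\Lc_n}-\lambda)$ if and only if $Tx\in\ker(p_n(A-a)^{-1}\rest_{\Gc_n}-(\lambda-a)^{-1})$, and bijectivity of $T$ on $\Lc_n$ transports non-trivial kernels to non-trivial kernels. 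For these finite-dimensional Hermitian endomorphisms, this kernel-matching is precisely the claimed spectral equivalence.

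The main obstacle is notational rather than mathematical. Because $\Lc_n$ is only required to lie in $\dom(T)$ and not necessarily in $\dom(A)$, the vector $Ax$ for $x\in\Lc_n$ is a priori only an element of the dual space $\dom(T)^{\#}$; the pairing $\pscal{Ax,z}$ must be interpreted through the form framework recalled in Section~\ref{sec1} and systematically replaced by $\pscal{Tx,Tz}+a\pscal{x,z}$, exactly as is done in the definition of $A_n$ and of its matrix representation there. Once this bookkeeping is in place, the proof reduces to the short sesquilinear computation above.
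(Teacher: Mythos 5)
Your proof is correct and follows essentially the same route as the paper's: both conjugate by the bijection $(A-a)^{1/2}:\Lc_n\to\Gc_n$ and exploit the identity $(A-a)^{1/2}\bigl((\lambda-a)^{-1}-(A-a)^{-1}\bigr)(A-a)^{1/2}=(\lambda-a)^{-1}(A-\lambda)$, read through the sesquilinear pairing so that vanishing of the compression is orthogonality to $\Lc_n$ on one side and to $\Gc_n$ on the other. Your explicit handling of the form-domain bookkeeping and of the trivial case $\lambda\leq a$ is a minor elaboration of the same argument.
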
 
\begin{proof} Note that $\lambda \in \Spec(A_{n})$
if and only if there exists $x\in\Lc_{n}\setminus\{0\}$ such that
\[
\pi_{n} (A-a)^{1/2}\left((\lambda-a)^{-1}-(A-a)^{-1}\right)(A-a)^{1/2}x=\frac{1}{\lambda-a}\pi_{n} (A-\lambda)x=0.
\]
By fixing $y=(A-a)^{1/2}x\in \Gc_n\setminus\{0\}$, it is readily seen that
$\lambda \in \Spec(A_{n})$ if and only if
there exist $y\in\Gc_n\setminus\{0\}$ such that
\[
\left\langle (A-a)^{1/2}u,\;\left((\lambda-a)^{-1}-(A-a)^{-1}\right)y\right\rangle=0
\]
for all $u\in \Lc_{n}$. Therefore, the statement $\lambda \in \Spec(A_n)$
is equivalent to the existence of $y\in\Gc_n\setminus\{0\}$
such that $\left((\lambda-a)^{-1}-(A-a)^{-1}\right)y\perp \Gc_n$ which, in turns,
is equivalent to $p_n\left((\lambda-a)^{-1}-(A-a)^{-1}\right)y=0$.
\end{proof}

We now turn to the proof of \eqref{mapping:essential_spectrum}.
We begin by establishing an alternative characterisation of 
the limiting essential spectrum and then we formulate a stability result
for the limiting spectra with respect to compact perturbations of the regular Galerkin sequence.

\begin{lemma}[Alternative characterisation of $\sigma_\ess(A,\cL)$]\label{alt_def_rel_ess}
Let
\begin{gather*}
\Fc(A):=\left\{f(A)\ :\ f\in C_c\big(\mathbb{R}\setminus \Spec_\ess(A),\R\big)\right\}\\ \Fc^\pm(A):=\left\{f(A)\ :\ f\in C_c\big(\mathbb{R}\setminus \Spec_\ess(A),\R^\pm\big)\right\}.
\end{gather*}
Then
\begin{align}
    \Spec_\ess(A,\Lc)&=\bigcap_{B\in \Fc(A)}\Spec(A+B,\Lc)\nonumber\\
&=\bigcap_{B\in \Fc^+(A)}\Spec(A+B,\Lc)=\bigcap_{B\in \Fc^-(A)}\Spec(A+B,\Lc).
\label{eq:intersection_essential_spectrum} 
\end{align}
\end{lemma}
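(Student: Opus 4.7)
The plan is to establish all three equalities simultaneously by proving two inclusions:
\begin{equation*}
\sigma_\ess(A,\cL) \subset \bigcap_{B\in\Fc(A)}\sigma(A+B,\cL), \qquad \bigcap_{B\in\Fc^\pm(A)}\sigma(A+B,\cL)\subset\sigma_\ess(A,\cL).
\end{equation*}
Since $\Fc^\pm(A)\subset\Fc(A)$ forces $\bigcap_{\Fc(A)}\subset\bigcap_{\Fc^\pm(A)}$, these two inclusions sandwich every intersection appearing in \eqref{eq:intersection_essential_spectrum}.

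For the first inclusion, I will observe that any $B=f(A)\in\Fc(A)$ is a self-adjoint compact (in fact finite rank) operator, because the compact support of $f$ lies in $\R\setminus\sigma_\ess(A)$ and so meets $\sigma(A)$ in at most finitely many isolated eigenvalues of finite multiplicity. Remark~\ref{rmk:ess_spectrum} then yields $\sigma_\ess(A+B,\cL)=\sigma_\ess(A,\cL)\subset \sigma(A+B,\cL)$ for every such $B$.

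For the second inclusion, I will show that every $\lambda\notin\sigma_\ess(A,\cL)$ admits some $B\in\Fc^+(A)$ (and separately some $B\in\Fc^-(A)$) with $\lambda\notin\sigma(A+B,\cL)$. If $\lambda\notin\sigma(A,\cL)$, the choice $B=0$, which lies in both $\Fc^\pm(A)$, already works. Otherwise Proposition~\ref{prop:properties_of_rel_spec}~(ii) puts $\lambda$ in $\sigma_\disc(A,\cL)\subset\sigma_\disc(A)$, making it an isolated eigenvalue of $A$ of finite multiplicity. I will pick $\delta>0$ so small that $[\lambda-\delta,\lambda+\delta]$ meets $\sigma(A)$ only at $\lambda$ and is disjoint from $\sigma_\ess(A)$, and take a continuous nonnegative bump $f$ supported in this interval with $f(\lambda)>0$. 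Setting $B=f(A)\in\Fc^+(A)$ and using the spectral theorem, the spectrum of $A+B=g(A)$ with $g(x)=x+f(x)$ is $\{g(\mu):\mu\in\sigma(A)\}$, which avoids $\lambda$ since $g(\lambda)=\lambda+f(\lambda)\neq\lambda$ and $g(\mu)=\mu\neq\lambda$ for every other $\mu\in\sigma(A)$. Combined with Proposition~\ref{prop:properties_of_rel_spec}~(ii), giving $\sigma_\disc(A+B,\cL)\subset\sigma(A+B)$, and Remark~\ref{rmk:ess_spectrum}, giving $\sigma_\ess(A+B,\cL)=\sigma_\ess(A,\cL)\not\ni\lambda$, this forces $\lambda\notin\sigma(A+B,\cL)$. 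A nonpositive bump with $f(\lambda)<0$ handles $\Fc^-(A)$ analogously.

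The main obstacle is the second inclusion in the case $\lambda\in\sigma_\disc(A,\cL)$: one must push this isolated spectral point away from $\lambda$ without disturbing the limiting essential spectrum, and do so with a sign-constrained perturbation so that the one-sided intersections $\bigcap_{\Fc^\pm(A)}$ are no larger than $\bigcap_{\Fc(A)}$. The key mechanism is that $f(A)$ with $\operatorname{supp}(f)\subset\R\setminus\sigma_\ess(A)$ is automatically finite rank, so by Remark~\ref{rmk:ess_spectrum} such a perturbation freezes $\sigma_\ess(\cdot,\cL)$ while moving $\sigma_\disc(A)$.
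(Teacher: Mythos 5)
Your proof is correct and follows essentially the same route as the paper: both arguments rest on Remark~\ref{rmk:ess_spectrum} (finite-rank perturbations freeze $\sigma_\ess(\cdot,\cL)$) together with Proposition~\ref{prop:properties_of_rel_spec}~(ii) ($\sigma_\disc(A+B,\cL)\subset\sigma_\disc(A+B)$), reducing everything to the removal of points of $\sigma_\disc$ by sign-constrained finite-rank spectral perturbations. The only difference is that the paper imports this last ingredient by citing the classical identity $\sigma_\ess(A)=\bigcap_{B\in\Fc(A)}\sigma(A+B)$ in the form $\bigcap_{B}\sigma_\disc(A+B)=\varnothing$, whereas you construct the bump function explicitly --- which has the minor virtue of making the one-sided cases $\Fc^\pm(A)$ genuinely explicit rather than ``following the same pattern''.
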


Here $C_c(\Omega,\R)$ denotes the set of all real-valued continuous functions
of compact support in the open set $\Omega$. Note that $\Fc(A)$ is a real vector
space and $\Fc^{\pm}(A)$ are cones, all spanned by projectors onto the
eigenspaces of $A$ associated with isolated eigenvalues of finite multiplicity.
At the end of this section it will become clear the reason why we highlight
the right hand side characterisation in 
\eqref{eq:intersection_essential_spectrum}.

\begin{proof}[Proof of Lemma \ref{alt_def_rel_ess}]
We only prove the first equality of \eqref{eq:intersection_essential_spectrum} 
as the proof of the other ones follows exactly the same pattern. 
It is well-known that
\begin{equation}
\sigma_\ess(A)=\bigcap_{B\in\Fc(A)}\sigma(A+B).
\label{eq:cap_essential} 
\end{equation}
Since all the operators in $\Fc(A)$ are of finite rank, then 
$\sigma_\ess(A+B)=\sigma_\ess(A)$ for all $B\in\Fc(A)$. Hence 
\eqref{eq:cap_essential} is equivalent to
\begin{equation}
\bigcap_{B\in\Fc(A)}\sigma_\disc(A+B)=\varnothing.
\label{eq:cap_empty}
\end{equation}

From Remark~\ref{rmk:ess_spectrum}, it follows that
$\Spec_\ess(A+B,\Lc)=\Spec_\ess(A,\Lc)$ for all $B\in \Fc(A)$.
Therefore $\Spec(A+B,\Lc)=\Spec_\disc(A+B,\Lc)\cup \Spec_\ess(A,\Lc)$. 
Moreover $\Spec_\disc(A+B,\Lc)\subseteq \Spec_\disc(A+B)$, by Proposition \ref{prop:properties_of_rel_spec}. Hence, by \eqref{eq:cap_empty},
$$\bigcap_{B\in\Fc(A)}\sigma_\disc(A+B,\cL)\subset\bigcap_{B\in\Fc(A)}\sigma_\disc(A+B)=\varnothing$$
and the result is proved.
\end{proof}

\smallskip

\begin{lemma}  \label{comp_pert_identity}
Let $T=T^\ast$ be such that $\|T\|<\infty$ and let $\Lc$ be a $T$-regular sequence. Let $K\in \Kc(\Hc)$. If
$-1\not\in \Spec(K)$, then
$$\Spec_\ess(T,\Lc)=\Spec_\ess(T,(1+K)\Lc) \quad\text{and}\quad \Spec_\disc(T,\Lc)=\Spec_\disc(T,(1+K)\Lc).$$
\end{lemma}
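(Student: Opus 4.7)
The plan is to prove the essential-spectrum equality first, and then deduce the discrete-spectrum equality from it using Propositions~\ref{prop_2} and~\ref{prop:properties_of_rel_spec}(ii). Write $\cL':=(1+K)\cL$ and let $p_n$ denote the orthogonal projection onto $\cL'_n$. Since $K$ is compact and $-1\notin\sigma(K)$, the Fredholm alternative yields $(1+K)^{-1}\in\Bc(\Hc)$, and $K' := (1+K)^{-1}-1 = -K(1+K)^{-1}$ is again compact with $-1\notin\sigma(K')$; this symmetry will take care of the reverse inclusions. Set $M := \|(1+K)^{-1}\|$, so that $\|u\|\leq M\|(1+K)u\|$. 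First, observe that $\cL'$ is itself a $T$-regular Galerkin sequence: for any $f\in\dom(T)=\Hc$ pick $u_n\in\cL_n$ with $u_n\to(1+K)^{-1}f$; then $(1+K)u_n\in\cL'_n$ tends to $f$ in the graph norm, which coincides with the ambient norm because $T$ is bounded.

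For the inclusion $\sigma_\ess(T,\cL)\subset\sigma_\ess(T,\cL')$, I would transport a singular Weyl sequence via $1+K$. Let $(x_k)$ be as in Definition~\ref{rel_ess_spec}: $x_k\in\cL_{n_k}$, $\|x_k\|=1$, $x_k\wto 0$ and $\pi_{n_k}(T-\lambda)x_k\to 0$. Set $y_k := (1+K)x_k\in\cL'_{n_k}$. Because $K$ is compact and $x_k\wto 0$, $Kx_k\to 0$ strongly, so $y_k\wto 0$ and $\|y_k\|\to 1$. The key computation estimates $\|p_{n_k}(T-\lambda)y_k\|$ by duality: any $w\in\cL'_{n_k}$ with $\|w\|\leq 1$ can be written $w=(1+K)u$ for some $u\in\cL_{n_k}$ with $\|u\|\leq M$, and
\[
\langle (T-\lambda)y_k,\, w\rangle = \langle (1+K)^{*}(T-\lambda)(1+K)x_k,\, u\rangle = \langle \pi_{n_k}(T-\lambda)x_k,\, u\rangle + \langle Dx_k,\, u\rangle,
\]
where $D := K^{*}(T-\lambda)+(T-\lambda)K+K^{*}(T-\lambda)K$ is compact because $T-\lambda$ is bounded and $K$ is compact. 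The first inner product is bounded by $\|\pi_{n_k}(T-\lambda)x_k\|\|u\|$ and tends to zero, while the second is bounded by $\|Dx_k\|\|u\|$ and tends to zero because $D$ is compact and $x_k\wto 0$. Hence $\|p_{n_k}(T-\lambda)y_k\|\to 0$, so that $z_k := y_k/\|y_k\|$ is a singular $\cL'$-Weyl sequence for $\lambda$. The reverse inclusion follows by applying this argument with $\cL'$ and $K'$ in the roles of $\cL$ and $K$, using $\cL=(1+K')\cL'$.

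For the discrete statement, it suffices by Definition~\ref{rel_disc_spec} to show $\sigma(T,\cL)=\sigma(T,\cL')$; combined with the essential-spectrum equality already proved, this then yields $\sigma_\disc(T,\cL)=\sigma_\disc(T,\cL')$. For any $\lambda\in\sigma(T,\cL)$, either $\lambda\in\sigma_\ess(T,\cL)=\sigma_\ess(T,\cL')\subset\sigma(T,\cL')$ by the first part, or $\lambda\in\sigma_\disc(T,\cL)\subset\sigma_\disc(T)\subset\sigma(T)\subset\sigma(T,\cL')$, where the second inclusion is Proposition~\ref{prop:properties_of_rel_spec}(ii) and the last is Proposition~\ref{prop_2} applied to the $T$-regular sequence $\cL'$. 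The opposite inclusion is symmetric.

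I expect the main obstacle to be the duality computation in the essential-spectrum step: the projection $p_{n_k}$ onto $\cL'_{n_k}$ is \emph{a priori} unrelated to $\pi_{n_k}$, and there is in general no uniform control on the difference $p_{n_k}-\pi_{n_k}$. The device that makes it work is the reparametrisation $w=(1+K)u$ with $u\in\cL_{n_k}$, which trades one projection for the other at the cost of a uniform factor $M$, followed by the decomposition $(1+K)^{*}(T-\lambda)(1+K)=(T-\lambda)+D$ whose compact remainder $D$ vanishes on $(x_k)$ precisely because $x_k\wto 0$. It is this last point that explains why the transport argument succeeds for the limiting essential spectrum but not directly for the full limiting spectrum, justifying the two-stage structure of the proof.
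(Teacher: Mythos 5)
Your proof is correct, and it rests on the same engine as the paper's: transport the singular sequence by $1+K$, convert the new projection $p_{n_k}$ back into $\pi_{n_k}$ via the substitution $w=(1+K)u$ with $\|u\|\le\|(1+K)^{-1}\|\,\|w\|$, and dispose of the compact remainder using $x_k\wto0$ and the boundedness of $T$. The difference is organisational, and your version is the leaner one. The paper does not argue directly from a general singular $\cL$-Weyl sequence (for which $\pi_{n_k}(T-\lambda)x_k\to0$ only approximately); it first proves the identity off $\Spec_\disc(T)$ by invoking Proposition~\ref{prop:quasi_Weyl} to obtain \emph{exact} eigenvectors, which forces a case split: $\lambda\in\Spec_\ess(T)$ is handled by Proposition~\ref{prop:properties_of_rel_spec}-(ii), $\lambda\notin\Spec(T)$ by the transport argument, and $\lambda\in\Spec_\ess(T,\cL)\cap\Spec_\disc(T)$ by passing to the finite-rank perturbation $\tilde{T}=T+(\mu-\lambda)\1_{(\lambda-\varepsilon,\lambda+\varepsilon)}(T)$, which moves $\lambda$ out of $\Spec_\disc(\tilde T)$ while leaving both limiting essential spectra unchanged (Remark~\ref{rmk:ess_spectrum}). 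Your duality estimate $\langle(T-\lambda)y_k,w\rangle=\langle\pi_{n_k}(T-\lambda)x_k,u\rangle+\langle Dx_k,u\rangle$ absorbs the $o(1)$ error uniformly over $\|u\|\le M$, so no case analysis is needed; this is a genuine simplification. (Algebraically the two computations are twins: the paper puts the compact correction on the left as $(1+K^*)^{-1}=1-\tilde K$, you expand $(1+K)^*(T-\lambda)(1+K)=(T-\lambda)+D$.) Your treatment of the discrete part — reduce to $\Spec(T)\subset\Spec(T,(1+K)\cL)$ via Propositions~\ref{prop_2} and~\ref{prop:properties_of_rel_spec}-(ii), after checking that $(1+K)\cL$ is $T$-regular — is the same as the paper's.
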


\smallskip

\begin{proof}[Proof of Lemma~\ref{comp_pert_identity}]
We firstly prove that
\begin{equation} \label{comp_per}
    \Spec_\ess(T,\Lc)\setminus \Spec_\disc(T)=
    \Spec_\ess(T,(1+K)\Lc)\setminus \Spec_\disc(T).
\end{equation}
Since 
\begin{equation} \label{symmetric_identity}
    \Lc=(1+K)^{-1}(1+K)\Lc=(1-K(1+K)^{-1})(1+K)\Lc,
\end{equation}
it suffices to show that the left hand side of \eqref{comp_per} is contained in the right hand side. Let $\lambda\in \Spec_\ess(T,\Lc)\setminus \Spec_\disc(T)$. If $\lambda\in \Spec_\ess(T)$, a direct application of Proposition~\ref{prop:properties_of_rel_spec}-(ii) ensures that $\lambda$ lies also in the right hand side of \eqref{comp_per}, so we can assume that $\lambda\not\in \Spec(A)$. According to Proposition~\ref{prop:quasi_Weyl}-(i), there exists $\lambda_k \to \lambda$ and $x_k\in \Lc_{n_k}$ such that $\|x_k\|=1$, $x_k\rightharpoonup 0$
and $\pi_{n_k}(T-\lambda_k)x_k=0$. For all $v_k\in\Lc_{n_k}$, and hence for all
$w_k=(1+K)v_k\in (1+K)\Lc_{n_k}$, we have
\begin{align*}
   0&=\langle (T-\lambda_k)x_k,v_k\rangle =
   \langle (1+K^*)^{-1}(T-\lambda_k)x_k,(1+K)v_k\rangle \\
   & =\langle (1+K^*)^{-1}(T-\lambda_k)x_k,w_k\rangle.
\end{align*}
Let $q_k$ be the orthogonal projection onto $(1+K)\Lc_{n_k}$.
Then \[q_k(1+K^*)^{-1}(T-\lambda_k)x_k=0.\] 
Now $(1+K^*)^{-1}=1-\tilde{K}$
where $\tilde{K}=K^*(1+K^*)^{-1}\in \Kc(\Hc)$. Hence
\[
     q_k(1-\tilde{K})(T-\lambda)x_k\to 0.
\]
But, since $\|T\|<\infty$ and $x_k\rightharpoonup 0$,
$\tilde{K}(T-\lambda)x_k\to 0$, so that also $q_k(T-\lambda)x_k\to 0$. Thus
$q_k(T-\lambda)y_k\to 0$ for $y_k=(1+K)x_k\rightharpoonup 0$. By re-normalising
$y_k$ in the obvious manner, we obtain a singular $\cL$-Weyl sequence for
$\lambda\in \Spec(T,(1+K)\Lc)$, ensuring \eqref{comp_per}.

To complete the proof of the first identity in the conclusion of the lemma,
suppose that $\lambda\in \Spec_\ess(T,\Lc)\cap \Spec_\disc(T)$.
For any $\mu\not=\lambda$ let $\tilde{T}=T+(\mu-\lambda)\1_{(\lambda-\varepsilon,\lambda+\varepsilon)}(T)$ where $\varepsilon>0$ is sufficiently small. Then 
$\lambda\in \Spec_\ess(\tilde{T},\Lc)\setminus \Spec_\disc(\tilde{T})$.
By virtue of \eqref{comp_per} and Remark~\ref{rmk:ess_spectrum}, 
$\lambda\in \Spec_\ess(\tilde{T},(1+K)\Lc)=\Spec_\ess(T,(1+K)\Lc)$ as needed.

We now show the second identity in the conclusion of the lemma. By virtue of \eqref{symmetric_identity} and
the first identity which we just proved, 
it is enough to verify 
\[\Spec_\disc(T,\Lc)\subset \Spec(T,(1+K)\Lc).\]
This, in turns, follows from Proposition~\ref{prop:properties_of_rel_spec}-(ii) and \eqref{basic_enclosure}, since 
\[\Spec_\disc(T,\Lc)\subset \Spec_\disc(T)\mbox{  and  } \Spec(T)\subset \Spec(T,(1+K)\Lc)\]
taking into account that $(1+K)\Lc$ is a $T$-regular sequence.
\end{proof}

\smallskip

We now complete the proof of Theorem~\ref{thm:mapping} by showing
\eqref{mapping:essential_spectrum}.
Let $\lambda\in \Spec_\ess(A,\Lc)$. By virtue of Lemma~\ref{alt_def_rel_ess},
this is equivalent to the statement
$$\forall B\in \Fc^+(A),\qquad \lambda \in \Spec(A+B,\Lc).$$ 
Since $B\geq 0$ and
$a<\min[\Spec(A+B)]$, according to \eqref{mapping:spectrum} 
the latter is equivalent to
$$\forall
B\in \Fc^+(A),\qquad (\lambda-a)^{-1}\in \Spec((A+B-a)^{-1},\Gc_B)$$ 
where $\Gc_B=(A+B-a)^{-1/2} \Lc$. Since $B$ has finite rank and is therefore 
compact,
Lemma~\ref{comp_pert_identity} ensures that the above in turns is equivalent to
$$\forall
B\in \Fc^+(A),\qquad (\lambda-a)^{-1}\in \Spec((A+B-a)^{-1},\Gc_0).$$
Note that $0\not\in \Spec((A+B-a)^{1/2}(A-a)^{-1/2})$ as the corresponding 
operator is an invertible function of $A$.
Now $(A+B-a)^{-1}=(A-a)^{-1}+\tilde{B}$, where $\tilde{B}=-(A-a)^{-1}B(A+B-a)^{-1}$ runs over all of $\Fc^-\left((A-a)^{-1}\right)$ as $B$ runs
over all $\Fc^+(A)$ and conversely. For the latter note that $f\in\Fc^+(A)$
if and only if $-f((\cdot -a)^{-1})\in \Fc^-((A-a)^{-1})$. Thus, once again by Lemma~\ref{alt_def_rel_ess}, $\lambda\in \Spec_\ess(A,\Lc)$ is equivalent to
\[(\lambda-a)^{-1}\in \Spec_\ess((A-a)^{-1},\Gc). \]
This completes the proof of Theorem~\ref{thm:mapping}.
\end{proof}

\begin{remark}  \label{standard_weyl}
The above proof
mimics the proof of the classical Mapping Theorem for the 
essential spectrum which can be deduced from the characterisation 
\[
    \Spec_\ess(A)=\bigcap_{B\in \Kc(\Hc)} \Spec(A+B),
\] 
see, e.g., \cite{ReeSim1}.\dqed
\end{remark}

%%%%%%%%%%%%%%%%%%%%%%%%%%%%%%%%%%%%%%%%%%%%%%%%%%%%%%
\section{Stability properties of the limiting  essential spectrum} \label{sec4}

In this final section we present the main contribution of this paper. It strongly 
depends on the validity of Theorem \ref{thm:mapping}.

\begin{theorem}[Weyl-type stability theorem for the limiting spectra]\label{thm:Weyl}
Let $A$ and $B$ be two selfadjoint operators which are bounded below. 
Assume that for some $a<\inf\{\sigma(A),\sigma(B)\}$, 
\begin{equation} \label{domain_condition}
\dom((B-a)^{1/2})= \dom((A-a)^{1/2})
\end{equation}
 and
\begin{equation}
       (A-a)^{1/2}((B-a)^{-1/2}-(A-a)^{-1/2})\in \Kc(\Hc).
\label{eq:condition_compactness}
\end{equation}
Then
\[\Spec_{\rm ess}(A,\Lc)=\Spec_{\rm ess}(B,\Lc)\]
for all sequences $\cL=(\cL_n)$ which are simultaneously 
$(A-a)^{1/2}$-regular and $(B-a)^{1/2}$-regular.
\end{theorem}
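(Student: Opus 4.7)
The plan is to reduce the unbounded statement to a bounded one via the mapping theorem (Theorem~\ref{thm:mapping}) and then to combine two stability results already at our disposal: Lemma~\ref{comp_pert_identity}, which controls perturbations of the Galerkin sequence, and Remark~\ref{rmk:ess_spectrum}, which controls compact perturbations of the operator. Set $T_A:=(A-a)^{1/2}$, $T_B:=(B-a)^{1/2}$, $\Gc^A_n:=T_A\cL_n$ and $\Gc^B_n:=T_B\cL_n$. Because $\cL$ is both $T_A$- and $T_B$-regular and $T_A,T_B$ are bijections of their (common) domain onto $\cH$, the sequences $\Gc^A$ and $\Gc^B$ are dense in $\cH$, hence regular for the bounded operators $(A-a)^{-1}$ and $(B-a)^{-1}$. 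By Theorem~\ref{thm:mapping}, applied to both $A$ and $B$, it is enough to prove
\[\sigma_\ess\bigl((A-a)^{-1},\Gc^A\bigr)=\sigma_\ess\bigl((B-a)^{-1},\Gc^B\bigr).\]

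I would split this identity into
\begin{itemize}
\item[(i)] $\sigma_\ess((A-a)^{-1},\Gc^A)=\sigma_\ess((A-a)^{-1},\Gc^B)$, via Lemma~\ref{comp_pert_identity};
\item[(ii)] $\sigma_\ess((A-a)^{-1},\Gc^B)=\sigma_\ess((B-a)^{-1},\Gc^B)$, via Remark~\ref{rmk:ess_spectrum}.
\end{itemize}
For (i) the key is to exhibit $\Gc^B$ as a compact perturbation of $\Gc^A$. Put $U:=T_AT_B^{-1}$; by \eqref{domain_condition} and the closed graph theorem, $U$ is bounded on $\cH$, and hypothesis \eqref{eq:condition_compactness} reads $K_0:=U-1\in\Kc(\cH)$. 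Since $U$ is boundedly invertible with inverse $T_BT_A^{-1}$, the identity $U^{-1}-1=-K_0U^{-1}$ shows that $K:=U^{-1}-1\in\Kc(\cH)$, and $-1\notin\sigma(K)$ because $1+K=U^{-1}$ is invertible. Then
\[\Gc^B_n=T_B\cL_n=U^{-1}T_A\cL_n=(1+K)\,\Gc^A_n,\]
and Lemma~\ref{comp_pert_identity} applied to the bounded selfadjoint operator $(A-a)^{-1}$ yields (i). For (ii) one writes
\[(A-a)^{-1}-(B-a)^{-1}=T_A^{-1}(T_A^{-1}-T_B^{-1})+(T_A^{-1}-T_B^{-1})T_B^{-1}=-T_A^{-2}K_0-T_A^{-1}K_0T_B^{-1},\]
which is compact as a sum of products of a compact operator with bounded ones; Remark~\ref{rmk:ess_spectrum} then delivers (ii).

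The main obstacle is the algebraic step producing the compact operator $K$ on the \emph{correct} side: the hypothesis only gives $T_AT_B^{-1}-1$ compact, but step (i) needs $T_BT_A^{-1}-1$ compact in order to write $\Gc^B=(1+K)\Gc^A$, which becomes available only through the invertibility of $U$. Once that is in place, the remainder of the argument is a matter of carefully chaining Theorem~\ref{thm:mapping}, Lemma~\ref{comp_pert_identity} and Remark~\ref{rmk:ess_spectrum}, together with the routine checks that $\Gc^A$ and $\Gc^B$ are regular sequences for the relevant bounded resolvents.
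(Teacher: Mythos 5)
Your proposal is correct and follows essentially the same route as the paper: reduce to the resolvents via Theorem~\ref{thm:mapping}, then combine Remark~\ref{rmk:ess_spectrum} (compact change of operator, using that the difference of resolvents is compact) with Lemma~\ref{comp_pert_identity} (compact change of Galerkin sequence, using $-1\notin\sigma(K)$). The only difference is that you swap the order of the two middle steps, which forces you to invert $U=(A-a)^{1/2}(B-a)^{-1/2}$ to get the compact perturbation on the other side --- a valid but inessential variation.
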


Under Assumption~\eqref{domain_condition},~\eqref{eq:condition_compactness} is
equivalent to the same condition with the roles of $A$ and $B$ reversed:
\begin{equation} \label{other_condition}
       (B-a)^{1/2}((A-a)^{-1/2}-(B-a)^{-1/2})\in \Kc(\Hc).
 \end{equation}
 Note however that \eqref{domain_condition} and \eqref{eq:condition_compactness}
 do not imply necessarily that an $A$-regular sequence is also $B$-regular. For this
 it is enough to consider an example where $\dom(A)\not=\dom(B)$. Let $A=\partial_x^4$
 with domain \[\dom(A)=H^4(0,1)\cap\{u(0)=u(1)=0,\,u''(0)=u''(1)=0\}\subset L^2(0,1)=\cH.\]
 Let $B=\partial_x^4+|1\rangle \langle 1|(1-\partial_x^2)$ with domain
 \[\dom(B)=H^4(0,1)\cap\left\{u(0)=u(1)=0,\,u''(0)=u''(1)=\int_0^1 u(x)\ud x\right\}\not=\dom(A).\]
 Then $A^{1/2}=-\partial_x^2$ and $B^{1/2}=-\partial_x^2+|1\rangle \langle 1|$
 both with domain 
 \[
 \dom(A^{1/2})=\dom(B^{1/2})=H^2(0,1)\cap \{u(0)=u(1)=0\}.
\]
Since $A^{1/2}(B^{-1/2}-A^{-1/2})=|1\rangle \langle 1|B^{-1/2}$ is a rank-one
operator, $A$ and $B$ satisfy the hypotheses of Theorem~\ref{thm:Weyl}, but clearly $A$-regular sequences $\cL_n\subset \dom(A)\setminus \dom(B)$ are not $B$-regular.

\begin{remark} \label{form_bounded}
The KLMN theorem \cite{ReeSim4} ensures that if $B-A$ is a densely defined symmetric
 $A$-form-bounded operator with bound 
less than $1$, then \eqref{domain_condition} holds for $a$ 
sufficiently negative. \dqed
\end{remark}

The following example from \cite{LewSer-09} shows that Theorem~\ref{thm:Weyl} cannot be  easily  generalised to operators which are not semi-bounded.

\begin{example}[Relatively compact perturbations of the Dirac operator] 
Let $A=D^0$ and $B=D^0+V$ where $D^0$ denotes the free Dirac operator with unit mass \cite{Thaller} and $V\in C^\ii_c(\R^3)$ is a smooth non-negative function of
compact support. The ambient Hilbert space here is $\Hc=L^2(\R^3,\C^4)$. 
Under the additional assumption that $\sup V=\norm{V}_{L^\ii(\R^3)}<1$, it is
guaranteed that $0\notin\Spec(B)$. Furthermore it can be verified that
$$D\left(|A|^{1/2}\right)=D\left(|B|^{1/2}\right)=H^{1/2}(\R^3,\C^4)$$
and that
$$|A|^{1/2}\left(|B|^{-1/2}-|A|^{-1/2}\right)\in\Kc(\cH).$$
As a consequence of \cite[Theorem 2.7]{LewSer-09}, it is known that there exists a $B$-regular Galerkin sequence $\cL=(\cL_n)$ such that 
\begin{equation}
\sigma_\ess(B,\cL)\supset \big[0\,;\, \sup V\big].
\label{eq:ex_Dirac_poll_set} 
\end{equation}
These Galerkin spaces comprise upper and lower spinors, meaning that 
$$\cL_n=\Span\left\{\left(\begin{matrix}f^n_1\\ 0\end{matrix}\right),...,\left(\begin{matrix}f^n_{d_n}\\ 0\end{matrix}\right),\left(\begin{matrix}0\\g^n_1\end{matrix}\right),...,\left(\begin{matrix}0\\g^n_{d'_n}\end{matrix}\right) \right\}$$
for suitable $(f^n_j),\,(g^n_j)\subset L^2(\R^3,\C^2)$. This basis is known to be free of pollution if the external field $V=0$, that is 
$$
\sigma(A,\cL)=\sigma(D^0)=(-\ii,-1]\cup[1,\ii)=\Spec_\ess(A,\Lc).
$$
Hence $\Spec_\ess(A,\Lc)\neq \Spec_\ess(B,\Lc)$ so Theorem~\ref{thm:Weyl} fails
for operators which are strongly indefinite.\dqed
\end{example}

\smallskip

\begin{proof}[Proof of Theorem \ref{thm:Weyl}]
Denote by $K$ the operator on the left side of 
\eqref{eq:condition_compactness}. Then
\begin{equation}
(B-a)^{-1}-(A-a)^{-1}=(A-a)^{-1/2}K(B-a)^{-1/2}+(A-a)^{-1}K\in \mathcal{K}(\cH).
\label{eq:resolvent_compact}
\end{equation}
Let $\Gc:=(A-a)^{1/2}\cL$. According to \eqref{mapping:essential_spectrum}, 
$$\lambda \in \Spec_\ess( A, \Lc) 
\quad\Longleftrightarrow \quad
(\lambda-a)^{-1} \in \Spec_\ess\left((A-a)^{-1}, \Gc\right).$$
By Remark \ref{rmk:ess_spectrum}, 
$$\Spec_\ess\left((B-a)^{-1}, \Gc\right)=\Spec_\ess\left((A-a)^{-1}, \Gc\right).$$

Let $\Gc'=(B-a)^{1/2}\cL$. Then
$$\Gc=(A-a)^{1/2}\cL=(A-a)^{1/2}(B-a)^{-1/2}\Gc'=(1+K)\Gc'.$$
Note that $K=(A-a)^{1/2}(B-a)^{-1/2}-1$ and $-1\not\in \Spec(K)$ as
a consequence of the fact that 
$0\not\in \Spec((A-a)^{1/2}(B-a)^{-1/2})$ by~\eqref{domain_condition}. 
According to Lemma~\ref{comp_pert_identity},
\begin{equation}
\Spec_\ess\left((B-a)^{-1}, \Gc\right)=\Spec_\ess\left((B-a)^{-1}, \Gc'\right)
\label{eq:to_be_shown}
\end{equation}
The conclusion follows by applying 
Theorem~\ref{thm:mapping} again, this time to operator $B$.
\end{proof}

\begin{corollary}\label{cor:Weyl}
Let $A$ and $B$ be two bounded-below selfadjoint operators
such that \eqref{domain_condition} holds true for some 
$a<\inf\{\sigma(A),\sigma(B)\}$.
Assume that $C:=B-A$ is a densely defined symmetric operator such that 
\begin{equation}
    C \in \mathcal{B}(\dom((B-a)^{\beta}),\Hc)
\label{eq:hyp_bound_cor} 
\end{equation}
and
\begin{equation}
    (A-a)^{-\alpha}C(B-a)^{-\beta}\in \Kc(\Hc)
\label{eq:hyp_compact_cor} 
\end{equation}
for some $0\leq \alpha, \beta<1$ with $\alpha+\beta<1$. 
Then
\[\Spec_{\rm ess}(A,\Lc)=\Spec_{\rm ess}(B,\Lc)\]
for all sequences $\cL=(\cL_n)$ which are simultaneously 
$(A-a)^{1/2}$-regular and $(B-a)^{1/2}$-regular.
\end{corollary}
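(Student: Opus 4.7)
The strategy is to verify the hypothesis of Theorem~\ref{thm:Weyl}, namely that $(A-a)^{1/2}\bigl((B-a)^{-1/2}-(A-a)^{-1/2}\bigr)$ is compact; once this is in place the conclusion follows at once from that theorem.

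Setting $K:=(A-a)^{-\alpha}C(B-a)^{-\beta}\in\Kc(\Hc)$ by~\eqref{eq:hyp_compact_cor}, I obtain the factorisation $C=(A-a)^{\alpha}K(B-a)^{\beta}$ on $\dom((B-a)^\beta)$, and by passing to adjoints (using $C^*=C$) also $(B-a)^{-\beta}C(A-a)^{-\alpha}\in\Kc(\Hc)$, hence the companion factorisation $C=(B-a)^{\beta}K^*(A-a)^{\alpha}$ on $\dom((A-a)^\alpha)$. Since $\alpha+\beta<1$ forces $\min(\alpha,\beta)<1/2$, and since the hypothesis~\eqref{eq:condition_compactness} of Theorem~\ref{thm:Weyl} is symmetric in $A$ and $B$ via its equivalent form~\eqref{other_condition}, I may assume without loss of generality that $\alpha\le 1/2$ and verify~\eqref{eq:condition_compactness}; the complementary case $\beta\le 1/2$ with $\alpha>1/2$ is handled symmetrically by verifying~\eqref{other_condition} via the companion factorisation.

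The key computation combines the integral representation $x^{-1/2}=\pi^{-1}\int_0^\infty t^{-1/2}(x+t)^{-1}dt$, applied via the functional calculus to both $A-a$ and $B-a$, with the second resolvent identity $(B-a+t)^{-1}-(A-a+t)^{-1}=-R_A(t)CR_B(t)$, where $R_X(t):=(X-a+t)^{-1}$. Inserting the factorisation of $C$ and commuting $(A-a)^\alpha$ through $R_A(t)$ yields
\[
(A-a)^{1/2}\bigl[(B-a)^{-1/2}-(A-a)^{-1/2}\bigr]=-\frac{1}{\pi}\int_0^\infty\frac{dt}{\sqrt{t}}\,(A-a)^{1/2+\alpha}R_A(t)\,K\,(B-a)^{\beta}R_B(t).
\]
For each $t>0$ the integrand is the product of the compact operator $K$ with two bounded factors, using the elementary estimate $\|(X-a)^{\gamma}R_X(t)\|\le C_\gamma\min(1,t^{\gamma-1})$ valid for $\gamma\in[0,1]$ (here $1/2+\alpha\le 1$ and $\beta<1$ both satisfy this). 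That same estimate provides a norm majorant of the integrand of the form $t^{-1/2}\cdot\min(1,t^{\alpha-1/2})\cdot\min(1,t^{\beta-1})$, which is integrable near zero (like $t^{-1/2}$) and at infinity (like $t^{\alpha+\beta-2}$) precisely because $\alpha+\beta<1$. The Bochner integral therefore converges in operator norm and defines a compact operator, establishing~\eqref{eq:condition_compactness}, and the conclusion follows from Theorem~\ref{thm:Weyl}.

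The main obstacle is the bookkeeping of fractional powers: the bounded-operator estimate for $(A-a)^{1/2+\alpha}R_A(t)$ requires $1/2+\alpha\le 1$, so the preliminary reduction to $\alpha\le 1/2$ afforded by the adjoint symmetry and the equivalence between~\eqref{eq:condition_compactness} and~\eqref{other_condition} is indispensable. The integrability at infinity matches the threshold $\alpha+\beta<1$ exactly, reflecting the sharpness of the hypothesis for the present approach.
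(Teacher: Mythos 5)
Your proposal is correct and follows essentially the same route as the paper's own proof: the reduction (via the symmetry of \eqref{eq:condition_compactness} and \eqref{other_condition}, and taking adjoints of $L=(A-a)^{-\alpha}C(B-a)^{-\beta}$) to the case $\alpha\le 1/2$, followed by the square-root integral representation, the second resolvent identity, the factorisation of the integrand as a bounded operator times $L$ times a bounded operator, and the $O(s^{-1/2})$ / $O(s^{\alpha+\beta-2})$ bounds securing Bochner convergence, all match the paper's argument. The only cosmetic difference is that you make the symmetric reduction explicit at the outset, whereas the paper runs the $\alpha\le1/2$ case first and then exchanges the roles of $A$ and $B$ for $\alpha>1/2$.
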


\begin{remark}
Let $A$ be a given bounded-below selfadjoint operator and assume that $A$ has a gap $(a,b)$ in its essential spectrum in the following precise sense,
$$\sigma_\ess(A)\cap(a,b)=\varnothing,\qquad \tr\left(\1_{(-\ii,a)}(A)\right)=\tr\left(\1_{(b,\ii)}(A)\right)=+\ii.$$
Let $\Pi:=\1_{(c,\ii)}(A)$ where $a<c<b$. Results shown in \cite{LewSer-09} ensure that, when the Galerkin spaces $\cL_n$ are compatible with the decomposition $\Hc=\Pi\cH\oplus(1-\Pi)\cH$ (i.e. when $\Pi$ and $\pi_n$ commute for all $n$), 
there is no pollution in the gap: $\sigma_\ess(A,\cL)\cap(a,b)=\varnothing$. 
According to \cite[Corollary 2.5]{LewSer-09},  when
\begin{equation}
(B-a)^{-1}C(A-a)^{-1/2}\in \mathcal{K}(\cH),
\label{eq:cond_LewSer} 
\end{equation}
then $\sigma_\ess(B,\cL)=\varnothing$ as well. 

In this respect, Theorem \ref{thm:Weyl} can be seen as a generalisation of these results. Although condition \eqref{eq:hyp_compact_cor} is stronger than \eqref{eq:cond_LewSer}, the statement guarantees that the \emph{whole} polluted spectrum will not move irrespectively of the $(A-a)^{1/2}$-regular Galerkin family $\cL$ and not only for those satisfying $[\Pi,\pi_n]=0$ for all $n$.\dqed
\end{remark}

\begin{example}[Periodic Schr{\"o}dinger operators]
Let $A=-\Delta+V_{\rm per}$ where $V_{\rm per}$ is a periodic potential with respect to some fixed lattice $\mathcal{R}\subset\R^d$ (for instance $\mathcal{R}=\Z^3$). Let $C=W(x)$ be a perturbation. Assume that
$$V_{\rm per}\in L^p_{\rm loc}(\R^d)\ \text{ where }\left\{\begin{array}{ll}
p=2 &\text{ if }d\leq3\\
p>2 &\text{ if }d=4\\
p=d/2 &\text{ if }d\geq5
\end{array}\right.$$
and that
$$W\in L^q(\R^d)\cap L^p_{\rm loc}(\R^d)+L^\ii_\epsilon(\R^d)$$
for $\max(d/2,1)<q<\ii$.
Then \eqref{eq:hyp_compact_cor} holds true for suitable $\alpha$, $\beta$ and $a$, and therefore 
\begin{equation}
\sigma_\ess\left(-\Delta+V_{\rm per}+W,\cL\right)=\sigma_\ess\left(-\Delta+V_{\rm per},\cL\right)
\label{eq:ess_spec_periodic} 
\end{equation}
for all $A$-regular Galerkin sequence $\cL$. See \cite[Section 2.3.1]{LewSer-09}.

A Galerkin sequence $\cL$ which does lead to any pollution in a given gap, can be found by localised Wannier functions, \cite{LewSer-09,CanDelLew-08b}. In practice, these functions can only be calculated numerically, so it is natural to ask what would be the polluted spectrum when they are known only approximately. According to \eqref{eq:ess_spec_periodic}, the polluted spectrum will not increase in size more than that of the unperturbed operator $-\Delta+V_{\rm per}$. \dqed
\end{example}

\begin{example}[Optimality of the constants in Corollary~\ref{cor:Weyl}] 
\label{ex3}
Let $\Hc$, $\Lc$, $e_n^\pm$ and $f_n^{\pm}$ be as in Example~\ref{ex2}. Let 
\[
    A=\sum_n n^\ell|f^+_n\rangle \langle f^+_n| +
    \sum |f^-_n\rangle \langle f^-_n|
\quad \text{and} \quad
B=\sum_n n^r|e^+_n\rangle \langle e^+_n| +
    \sum |e^-_n\rangle \langle e^-_n|.
\]
The matrix representation of $A$ and $B$ in the basis $e^\pm_n$
is made out of $2\times 2$ blocks placed along the diagonal. More precisely
$A=\diag[A_n]$, $B=\diag[B_n]$ and $C=\diag[C_n]$;
where
\[
 A_n=R_{-n}\begin{pmatrix} n^\ell & 0 \\ 0 & 1 \end{pmatrix}
R_{n}, \qquad B_n=\begin{pmatrix} n^r & 0 \\ 0 & 1 \end{pmatrix}
\]
and $C_n=A_n-B_n$ for
\[
    R_n=\begin{pmatrix}\cos\frac1n & \sin\frac1n \\ -\sin\frac1n & \cos\frac1n
\end{pmatrix}.
\]
Fix $a=0$ and let $L=A^{-\alpha}CB^{-\beta}$. The matrix representation 
of $L$ in the basis $e^\pm_k$
is $L=\diag[L_n]$ where we can calculate explicitly the entries as
\begin{align*}
     (L_n)_{11}&= -n^{-\beta r-\alpha \ell+r} \cos^2\frac1n +n^{- \beta
r}\sin^2\frac1n\\
&\qquad\qquad\qquad\qquad-n^{-r(\beta-1)}\sin^2\frac1n +n^{-\beta r -\alpha
\ell+\ell} \cos^2\frac1n \\[0,2cm]
(L_n)_{12}&=\cos\frac1n \sin\frac1n \left(n^{-\ell(\alpha-1)}-n^{-\alpha
\ell}\right) \\[0,2cm]
(L_n)_{21}&=\cos\frac1n \sin\frac1n \left(n^{-\beta r-\alpha \ell+\ell} - n^{-\beta
r-\alpha \ell+r}-n^{-\beta r} 
+ n^{-r(\beta-1)}\right) \\[0,2cm]
(L_n)_{22}&=\sin^2\frac1n \left(n^{-\ell(\alpha-1)} - n^{-\alpha \ell}\right).
\end{align*}
Therefore $L$ is compact,
given the following
\begin{equation} \label{eq:counterexample}
\begin{gathered}
   \ell=2, \quad 0<\beta,\alpha<1, \quad 0<r<2, \\
-\beta r-2\alpha+2<0, \quad \alpha>1/2, \quad \beta > 1-\frac{1}{r}.
\end{gathered}
\end{equation}
On the other hand, for $\ell=2$,
\[
\Spec_\ess(A,\Lc)=\{1,2\} \qquad \text{and} \qquad
\Spec_\ess(B,\Lc)=\{1\}.
\]
This example suggests that condition \eqref{eq:hyp_compact_cor}
in Corollary~\ref{cor:Weyl} is quasi-optimal for the stated range of $\beta$ 
and $\alpha$ as illustrated by Figure~\ref{fig:1}. Note however that in this example
\eqref{domain_condition} is only satisfied when $r=\ell$. 
\dqed
\end{example}

\begin{figure}[hth]
\centerline{\includegraphics[height=6cm]{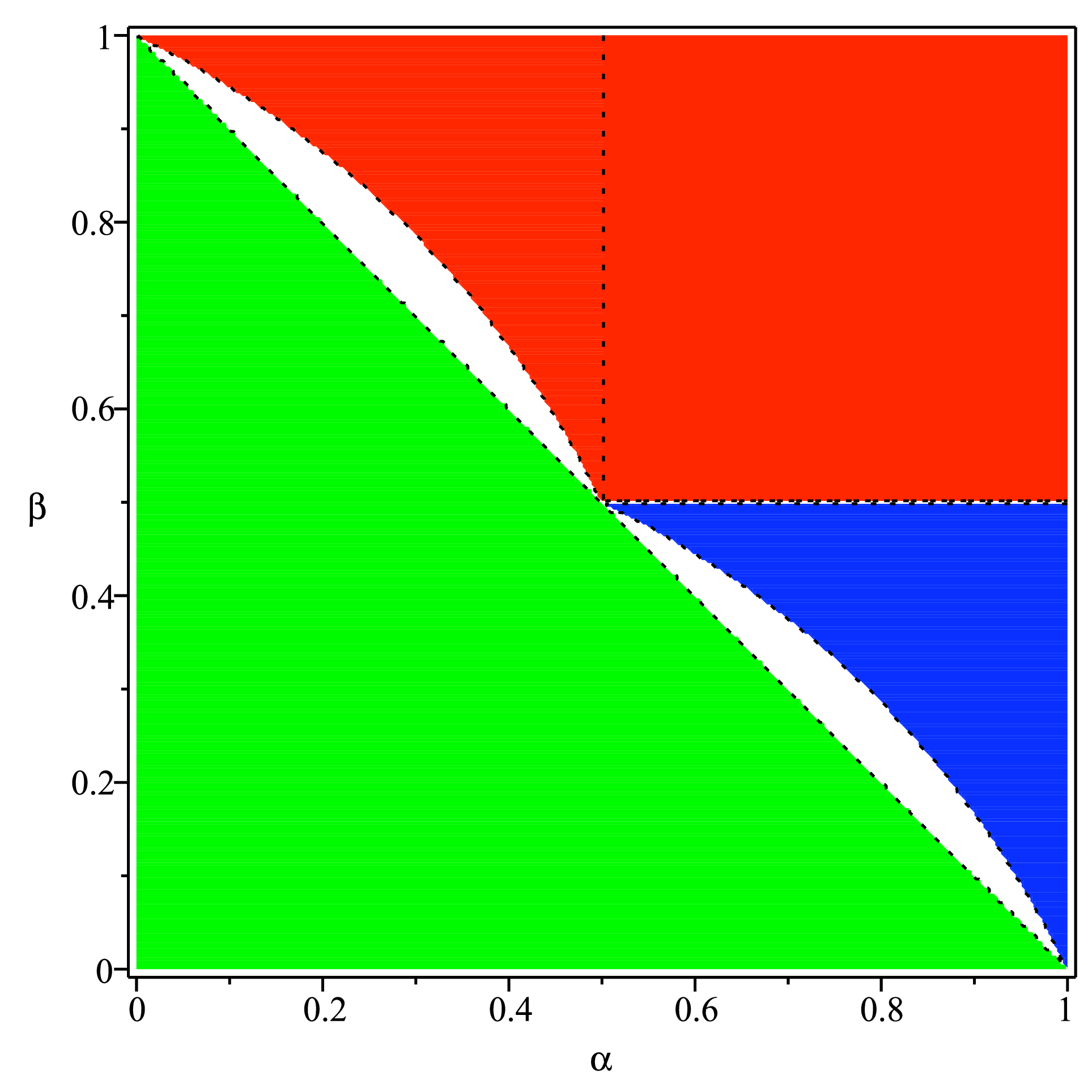}}
\caption{\label{fig:1}The region in green colour for the parameters
$\beta$ and $\alpha$ is covered by the conditions of Corollary~\ref{cor:Weyl}.
If $A$ and $B$ satisfy \eqref{eq:hyp_compact_cor} for $(\beta,\alpha)$ in this
region, 
then the limiting  essential spectrum is preserved. The region in red
shows the parameters $\beta$ and $\alpha$ in condition~\eqref{eq:counterexample}
of 
Example~\ref{ex3}. The region in blue is generated by exchanging the 
roles of $\beta$ and $\alpha$. It is not enough for $A$ and $B$ 
to satisfy \eqref{eq:hyp_compact_cor} for $(\beta,\alpha)$ in these two regions,
to 
guarantee preservation of the limiting essential spectrum.}
\end{figure}

\begin{proof}[Proof of Corollary \ref{cor:Weyl}] 
Assume firstly that $0\leq \alpha\leq 1/2$. 
The proof reduces to showing that the operator $K$ defined by
expression \eqref{eq:condition_compactness} is compact. 
Let $L=(A-a)^{-\alpha}C(B-a)^{-\beta}$ be the operator given by
\eqref{eq:hyp_compact_cor}. Since $\beta< 1$, we have
$\dom(B-a)\subset \dom(B-a)^{\beta}$, \cite[Theorem~4.3.4]{Davies}.
Then, by \eqref{eq:hyp_bound_cor}, $L\Hc\subset \dom((A-a)^\alpha)$ and
$
   Cx=(A-a)^\alpha L (B-a)^{\beta} x
$ 
for all $x\in \dom(B-a)$. By virtue of \eqref{eq:hyp_compact_cor},
\[
(A-a)^{1/2}(A-a+s)^{-1}C(B-a+s)^{-1}\in \Kc(\Hc)
\] 
for all $s\geq 0$. Moreover
\[
(A-a)^{1/2}((A-a+s)^{-1}-(B-a+s)^{-1})x=(A-a)^{1/2}(A-a+s)^{-1}C(B-a+s)^{-1}x
\] 
for all $x\in \Hc$,
as this identity is satisfied in a dense subspace of $\Hc$.
Thus
\begin{align*}
   K&=-\frac{1}{\pi}\int_0^\infty (A-a)^{1/2}(A-a+s)^{-1}C(B-a+s)^{-1} \frac{\ud s}{\sqrt{s}} \\
& = -\frac{1}{\pi}\int_0^\infty
\left\{(A-a)^{1/2}(A-a+s)^{-1}(A-a)^\alpha\right\} L 
\left\{(B-a)^{\beta}(B-a+s)^{-1}\right\} \frac{\ud s}{\sqrt{s}}.
\end{align*}
Both terms in brackets multiplying $L$ are bounded operators, 
then the integrand in the second expression is also 
a compact operator. Moreover, the integral converges in the Bochner sense
as its norm is $O(s^{\beta+\alpha-2})$ for $s\to \infty$ and
$O(s^{-1/2})$ for $s\to 0$. Thus $K\in \Kc(\Hc)$ in this case and
Theorem~\ref{thm:Weyl} implies the desired conclusion.

Now suppose that $1/2<\alpha< 1$, so that $0\leq \beta \leq 1/2$.
Since
\[
\dom (A-a)^\alpha \subset \dom(A-a)^{1/2}= \dom(B-a)^{1/2}
\subset  \dom(B-a)^{\beta},
\]
then $C\in \mathcal{B}(\dom (A-a)^\alpha,\Hc)$. Hence the operator
$(B-a)^{-\beta}C(A-a)^{-\alpha}$ is bounded and
$(B-a)^{-\beta}C(A-a)^{-\alpha}x=L^*x$ for all $x\in \Hc$. 
The proof is then completed by exchanging the roles of $A$ and $B$.
\end{proof}

\bigskip

\noindent\textbf{Acknowledgement.} The authors would like to acknowledge financial support from the British-French project PHC Alliance no. 22817YA. Partial support from the French Ministry of Research (ANR-10-BLAN-0101) is also acknowledged.
They would also like to thank \'Eric Ricard  for useful discussions.

\medskip

% \bibliographystyle{abbrv}
% \bibliography{bibliography}

\end{document}